\DeclareRobustCommand{\SkipTocEntry}[5]{}
\definecolor{LOcolor}{RGB}{150,100,0}
\newtheorem{Theorem}{Theorem}[section]
\newtheorem{Lemma}[Theorem]{Lemma}
\newtheorem{Proposition}[Theorem]{Proposition}
\theoremstyle{definition}
\newtheorem*{DefinitionNoNumber}{Definition}
\newtheorem*{ExampleNoNumber}{Example}
\newtheorem{Remark}[Theorem]{Remark}
\numberwithin{equation}{section}
\newcommand{\mR}{\mathbb{R}}                    
\newcommand{\mC}{\mathbb{C}}                    
\newcommand{\abs}[1]{\lvert #1 \rvert}          
\newcommand{\norm}[1]{\lVert #1 \rVert}         
\newcommand{\ol}[1]{\overline{#1}}
\newcommand{\supp}{\mathrm{supp}}
\newcommand{\eps}{\varepsilon}
\newcommand{\p}{\partial}
\newcounter{sidenote}
\begin{document}

\title{Free boundary methods and non-scattering phenomena}

\author{Mikko Salo}
\address{Department of Mathematics and Statistics, University of Jyv\"askyl\"a}
\email{mikko.j.salo@jyu.fi}

\author{Henrik Shahgholian}
\address{Department of Mathematics, KTH Royal Institute of Technology}
\email{henriksh@kth.se}




\begin{abstract}
We study a question arising in inverse scattering theory: given a penetrable obstacle, does there exist an incident wave that does not scatter? We show that every penetrable obstacle with real-analytic boundary admits such an incident wave. At zero frequency, we use quadrature domains to show that there are also obstacles with inward cusps having this property. In the converse direction, under a nonvanishing condition for the incident wave, we show that there is a dichotomy for boundary points of any penetrable obstacle having this property: either the boundary is regular, or the complement of the obstacle has to be very thin near the point. These facts are proved by invoking results from the theory of free boundary problems.
\end{abstract}

\maketitle

\section{Introduction} \label{sec_introduction}

\subsection{Motivation}

In this article we discuss some examples of non-scattering phenomena based on methods from free boundary problems. The connection between these fields was recently pointed out in \cite{CakoniVogelius}, and we invoke further ideas from free boundary problems to obtain stronger results. The methods are relevant both for inverse scattering problems and inverse boundary value problems. We first describe the boundary case and state the main results in that setting, and discuss the scattering case afterwards. All functions will be assumed real valued unless mentioned otherwise.

Let $\Omega \subset \mR^n$ be a bounded domain with smooth boundary, and let $q \in L^{\infty}(\Omega)$ be a potential in $\Omega$. Assuming that $0$ is not a Dirichlet eigenvalue for $\Delta + q$ in $\Omega$, for any $f \in H^{1/2}(\p \Omega)$ there is a unique solution $u \in H^1(\Omega)$ of the Dirichlet problem 
\[
(\Delta + q) u = 0 \text{ in $\Omega$}, \qquad u|_{\p \Omega} = f.
\]
We assume that we can fix a Dirichlet data $f$ and measure the corresponding Neumann data $\p_{\nu} u|_{\p \Omega}$ (interpreted weakly as an element of $H^{-1/2}(\p \Omega)$) on the boundary. This kind of situation arises in diffuse optical tomography \cite{ArridgeSchotland2009}. It is also relevant in electrical impedance tomography, i.e.\ Calder\'on problem \cite{Uhlmann_survey}, where the underlying conductivity equation $\mathrm{div}(\gamma \nabla v) = 0$ can often be reduced to the equation $(\Delta + q) u = 0$ with $q = -\gamma^{-1/2} \Delta(\gamma^{1/2})$ by using the Liouville transformation $v = \gamma^{-1/2} u$.

In inverse boundary value problems of this type, one often assumes the knowledge of the full Dirichlet-to-Neumann map 
\[
\Lambda_q: H^{1/2}(\p \Omega) \to H^{-1/2}(\p \Omega), \ \ \Lambda_q f = \p_{\nu} u|_{\p \Omega}.
\]
This corresponds to an idealized case where we can perform infinitely many measurements. However, in practice only finitely many measurements are possible. Moreover, the idealized problem is formally overdetermined when $n \geq 3$, in the sense that the unknown $q$ is a function of $n$ variables whereas the measurements (Schwartz kernel of $\Lambda_q$) depend on $2n-2$ variables. This suggests that fewer measurements might be sufficient. We are interested in the \emph{single measurement inverse problem}: which properties of $q$ can be determined from the measurement $\p_{\nu} u|_{\p \Omega}$ corresponding to a fixed Dirichlet data $f \in H^{1/2}(\p \Omega)$?

In general, it is not possible to determine a potential $q$ from a single measurement. This is indicated by a heuristic dimension count argument: the measurement $\p_{\nu} u|_{\p \Omega}$ is a function of $n-1$ variables, whereas the unknown potential is a function of $n$ variables. Thus the inverse problem of determining $q$ from a single measurement is formally underdetermined. A related problem is to determine the shape of a \emph{penetrable obstacle} from a single measurement. This corresponds to potentials of the form 
\[
q = h \chi_D, \qquad \ol{D} \subset \Omega ,
\]
where $\chi_D$ is the characteristic function of $D$ (a bounded open set, i.e.\ the obstacle), and $h$ satisfies a nonvanishing condition at $\p D$. We will sometimes assume the following conditions for $D$ and $h$.

\begin{DefinitionNoNumber}
A bounded open set $D \subset \mR^n$ is called a \emph{solid domain} if $D$ and $\mR^n \setminus \ol{D}$ are connected, and $\mathrm{int}(\ol{D}) = D$. We say that $h$ is a \emph{contrast} for $D$ if 
\[
h \in L^{\infty}(\mR^n), \quad \hbox{and } \abs{h} \geq c > 0 \hbox{ a.e.\ in some neighborhood of $\p D$.} 
\]
\end{DefinitionNoNumber}

Since the potential is $q = h \chi_D$, the values of the contrast outside $D$ will not play any role. The inverse problem is to determine the shape of the obstacle, i.e.\ $\p D$, or some properties of $\p D$ from a single measurement $\p_{\nu} u|_{\p \Omega}$. If $\p D$ is (say) a Lipschitz domain, then it is locally the graph of a function of $n-1$ variables and thus the inverse problem is formally well determined.

There are various results in the literature for determining $\p D$ from a single measurement. For a related Calder\'on type problem corresponding to the equation $\mathrm{div}(\gamma \nabla u) = 0$ where $\gamma = 1 + h \chi_D$ and $h$ is a nonzero constant, there are partial results when $D$ has special geometry, such as $D$ being a convex polygon or polyhedron, a ball, or a cylinder. There are also local uniqueness results (if $D$ and $D'$ are close enough in some sense then they can be distinguished by a single measurement) and estimates for the size of $D$. 
See \cite{Alessandrini1999} for a survey of classical results, and \cite{LiuTsou2020} and references therein for more recent results. The results in \cites{AlessandriniIsakov, AthanasopoulosCaffarelliSalsa} are of particular interest to us: they invoke methods from free boundary problems to show that if $D$ is e.g.\ a Lipschitz domain and it is not determined by a single measurement, then part of $\p D$ is necessarily real-analytic. We refer to \cites{KLS_jump, KLS_nodal, FKS_vectorial} for recent related results.


It turns out that such results are closely connected to a certain non-scattering phenomenon in inverse scattering theory. These problems involve a fixed frequency $\lambda \geq 0$. Given a bounded open set $\Omega \subset \mR^n$, we ask if there exist nontrivial solutions of 
\begin{equation} \label{eq1}
(\Delta + \lambda^2 + q) u_q = 0 \text{ in $\Omega$}, \qquad (\Delta + \lambda^2) u_0 = 0 \text{ in $\Omega$}
\end{equation}
that satisfy 
\begin{equation} \label{eq2}
u_q, u_0 \in H^1(\Omega), \qquad u_q-u_0 \in H^2_0(\Omega).
\end{equation}
This problem is rather similar to the interior transmission problem (see \cites{CakoniHaddar, CakoniColtonHaddar}) but note that we require $u_q, u_0$ to be $H^1$ instead of $L^2$. The above problem can in fact be considered as a matching problem as in \cite{FS_matching}. One could also accommodate the condition $u_q-u_0 - g \in H^2_0(\Omega)$ for some smooth enough $g$, which would be close to the inhomogeneous interior transmission problem.

If the boundary of $\Omega$ is smooth enough, the condition \eqref{eq2} can be written as 
\[
u_q|_{\p \Omega} = u_0|_{\p \Omega}, \qquad \p_{\nu} u_q|_{\p \Omega} = \p_{\nu} u_0|_{\p \Omega}.
\]
Thus, if one fixes the Dirichlet data $f = u_0|_{\p \Omega} = u_q|_{\p \Omega}$, this would mean that the measurement $\p_{\nu} u_q|_{\p \Omega}$ corresponding to $q$ is identical to the measurement $\p_{\nu} u_0|_{\p \Omega}$ for the zero potential. Thus the potential $q$ is invisible for this particular measurement and looks like empty space. In the terminology of scattering theory, if this happens we say that "the incident wave $u_0$ does not scatter".

We specialize the above question to the case of an obstacle $D$. The following is the main question studied in this article:

\begin{quote}
Given a bounded open set $D \subset \mR^n$ with $\ol{D} \subset \Omega$,  is there a contrast $h$ for $D$ such that there exist nontrivial solutions $u_q$ and $u_0$ with $q = h \chi_D$ satisfying \eqref{eq1}--\eqref{eq2}?
\end{quote}

If the answer is positive, then there is some contrast $h$ for $D$ that admits an incident wave that does not scatter (thus $D$ will be invisible with respect to this measurement). On the other hand, if the answer is negative, then the obstacle $D$ scatters every incident wave nontrivially.

\subsection{Main results}

There are various results stating that if $\p D$ is piecewise smooth and has a corner singularity, then every incident wave will scatter nontrivially. We will give precise references in Section \ref{subseq_scattering}. On the other hand, there seem to be few examples in the literature of penetrable obstacles admitting incident waves that do not scatter. Balls have this property \cite[Sections 10.3 and 8.4]{ColtonKress}, and \cite{GellRedmanHassell} gives examples of potentials $q \in C^{\infty}_c(\mR^n)$ having this property whose supports are unions of balls. See \cite{VogeliusXiao} for some related results.

Our first result states that any obstacle with real-analytic boundary admits incident waves that do not scatter:

\begin{Theorem} \label{thm_fb_example_analytic_intro}
Let $\Omega \subset \mR^n$ be a bounded open set, let $\lambda \geq 0$, and let $D \subset \mR^n$ be a bounded open set with real-analytic boundary such that $\ol{D} \subset \Omega$ and $\mR^n \setminus \ol{D}$ is connected. Suppose that $\lambda$ is not a Dirichlet eigenvalue for $-\Delta$ in $D$. Then there is a contrast for $D$ that admits an incident wave that does not scatter.
\end{Theorem}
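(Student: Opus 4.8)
The plan is to construct the contrast $h$ and the pair $(u_q, u_0)$ by solving a free boundary / overdetermined problem on $D$, exploiting that $\partial D$ is real-analytic. Set $w = u_q - u_0$; subtracting the two equations in \eqref{eq1} and using $q = h\chi_D$ gives, formally, $(\Delta + \lambda^2) w = -h\chi_D u_q = -h\chi_D(u_0 + w)$ in $\Omega$, while \eqref{eq2} demands $w \in H^2_0(\Omega)$, i.e. $w$ and $\partial_\nu w$ vanish on $\partial D$ (since outside $\ol D$ the equation $(\Delta+\lambda^2)w = 0$ together with Cauchy data zero on $\partial D$ and connectedness of $\mR^n \setminus \ol D$ forces $w \equiv 0$ there, so it suffices to produce $w \in H^2_0(D)$). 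Thus the task reduces to: \emph{find a nontrivial $u_0$ solving $(\Delta+\lambda^2)u_0 = 0$ in a neighborhood of $\ol D$ and a function $w \in H^2_0(D)$ together with an $L^\infty$ function $h$ bounded below near $\partial D$ such that $(\Delta + \lambda^2)w + h(u_0 + w) = 0$ in $D$.}

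The key idea is to pick $w$ first and then \emph{define} $h$ by the equation. Concretely, I would take $u_0$ to be any solution of $(\Delta + \lambda^2)u_0 = 0$ near $\ol D$ that does not vanish on $\partial D$ — e.g. a suitable plane wave or, at $\lambda = 0$, a harmonic polynomial — which exists because $\lambda$ is not a Dirichlet eigenvalue of $-\Delta$ in $D$ (this is exactly the hypothesis that lets us also invert the relevant operator below). Then solve the Dirichlet problem
\[
(\Delta + \lambda^2) v = u_0 \ \text{ in } D, \qquad v|_{\partial D} = 0,
\]
which is uniquely solvable precisely because $\lambda$ is not a Dirichlet eigenvalue. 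Here is where real-analyticity of $\partial D$ enters decisively: by elliptic regularity up to an analytic boundary, $v$ is real-analytic up to $\partial D$, so $\partial_\nu v|_{\partial D}$ is a real-analytic function on $\partial D$. Now set $w = v - u_0\,\psi$ for a cleverly chosen corrector $\psi$ vanishing to high order, or more cleanly: the Cauchy–Kovalevskaya theorem produces, near $\partial D$, a real-analytic solution $\phi$ of $(\Delta+\lambda^2)\phi = 0$ with $\phi|_{\partial D} = 0$ and $\partial_\nu \phi|_{\partial D} = \partial_\nu v|_{\partial D}$; subtracting, $w := v - \phi$ (extended by a cutoff, then corrected once more) has zero Cauchy data on $\partial D$ and hence lies in $H^2_0(D)$. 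Finally define
\[
h := -\frac{(\Delta + \lambda^2)w}{u_0 + w} \quad \text{in } D.
\]
Since $w$ has vanishing Cauchy data on $\partial D$ and $u_0 \neq 0$ there, the denominator is bounded below near $\partial D$; and because near $\partial D$ the numerator equals $(\Delta+\lambda^2)(v - \phi) = u_0 - 0 = u_0$ (the analytic corrector kills the singular part), $h$ extends to an $L^\infty$ function that is in fact bounded below by a positive constant near $\partial D$ — so $h$ is a genuine contrast for $D$. Setting $u_q := u_0 + w$ then gives $(\Delta + \lambda^2 + h\chi_D)u_q = 0$ in $\Omega$ by construction, with $u_q - u_0 = w \in H^2_0(\Omega)$, and $u_q, u_0$ are nontrivial since $u_0 \neq 0$.

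The main obstacle is organizing the corrector argument so that $h$ is simultaneously (a) globally $L^\infty$ on $D$ — the danger being that $u_0 + w$ could vanish somewhere in the interior of $D$, forcing a blow-up of $h$ — and (b) bounded away from zero in a full neighborhood of $\partial D$. Point (b) is handled by the analyticity computation above (the numerator is $\approx u_0 \neq 0$ near $\partial D$). Point (a) requires care: one fix is to rescale, replacing $u_0$ by $t u_0$ and $w$ by the correspondingly rescaled solution, and then choosing $t$ large (or the obstacle small) so that $|u_0| > |w|$ throughout $D$, guaranteeing $u_0 + w \neq 0$ everywhere; alternatively one absorbs the interior bad set by noting the construction is local near $\partial D$ and the values of $h$ away from $\partial D$ only need to be finite, which can be arranged by a partition-of-unity modification of $w$ in the interior. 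A secondary technical point is verifying the Cauchy–Kovalevskaya step globally along the analytic hypersurface $\partial D$ and patching the local analytic solutions — this is routine given real-analyticity of $\partial D$ but must be stated carefully, and it is the only place the analyticity hypothesis is truly used. I expect the write-up to reduce, after these reductions, to: (1) solve the Dirichlet problem; (2) apply analytic elliptic regularity; (3) apply Cauchy–Kovalevskaya to remove Cauchy data; (4) a scaling argument for global boundedness of $h$.
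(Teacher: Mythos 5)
Your construction of the pair $(w,h)$ near $\p D$ is sound in outline and close in spirit to the paper's: continue across the real-analytic hypersurface by Cauchy--Kowalevski plus patching by unique continuation, then \emph{define} the contrast from a chosen extension of the solution. But there are two genuine gaps. The most serious one is that you assume, without proof, the existence of a real-valued solution of $(\Delta+\lambda^2)u_0=0$ that is nonvanishing on $\p D$, offering ``a suitable plane wave'' as an example. For $\lambda>0$ this is false in general: by Lemma \ref{lemma_helmholtz_local_zeros} every real Helmholtz solution vanishes somewhere in every ball of radius $c_n/\lambda$, so real plane waves (whose nodal sets are hyperplanes spaced $\pi/\lambda$ apart) will typically cross $\p D$. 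Producing such a $u_0$ is exactly where the hypothesis that $\lambda$ is not a Dirichlet eigenvalue enters the paper, and it takes real work: solve $(\Delta+\lambda^2)v=0$ in $D$ with boundary value $1$, upgrade to $W^{1,p}$ with $p>n$ (Jerison--Kenig), and then run a Runge-type approximation by global Helmholtz solutions in $H^{1,p}(D)$ via a Hahn--Banach/Rellich duality argument (Theorem \ref{thm_helmholtz_nonvanishing}, Propositions \ref{prop_u0_nonvanishing} and \ref{prop_runge}). Your parenthetical ``which exists because $\lambda$ is not a Dirichlet eigenvalue'' states the needed fact but does not prove it, and your proposed examples do not deliver it.

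Second, your primary fix for interior zeros of $u_q=u_0+w$ --- rescaling $u_0\mapsto t\,u_0$ --- cannot work: the construction is linear in $u_0$ (the Dirichlet solution $v$, the Cauchy--Kowalevski corrector $\phi$, hence $w$ all scale by $t$), so the ratio $|w|/|u_0|$ is scale-invariant; and shrinking the obstacle is not an option since $D$ is given. Your fallback (a partition-of-unity modification of $w$ in the interior) is the right idea, but as stated it only controls the numerator; what is needed is that the denominator $u_0+w$ stays away from zero wherever the numerator does not vanish. The clean way --- and this is precisely the paper's Lemma \ref{lemma_local_global} --- is to modify $u_q$ itself: keep $u_q$ equal to the local extension near $\p D$, where it is close to $u_0$ and hence has a sign, and interpolate it to the constant $1$ deep inside $D$ via $v_0\psi+(1-\psi)$; then $u_q$ is positive on all of $\ol{D}$ and $h:=-(\Delta+\lambda^2)u_q/u_q$ is automatically $L^\infty$ and bounded away from zero near $\p D$. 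Note that this interpolation uses that $u_0$ has a fixed sign on $\p D$, not merely $u_0\neq 0$ there; with sign changes the interpolated $u_q$ would be forced to vanish inside $D$ at points where the numerator need not. With these two repairs your argument becomes essentially the paper's: Theorem \ref{thm_fb_example_analytic} applies Cauchy--Kowalevski directly to $(\Delta+\lambda^2+h_0)v=0$ with the Cauchy data of $u_0$ (which in addition lets one prescribe an arbitrary real-analytic $h_0$ near $\p D$), rather than first solving a Dirichlet problem and subtracting a corrector --- a minor variation; the essential missing ingredients in your write-up are the content of Section \ref{sec_helmholtz} and the positive interpolation of Lemma \ref{lemma_local_global}.
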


While corner singularities typically scatter every incident wave, we show that at least for $\lambda = 0$ there also exist obstacles with inward cusp singularities admitting incident waves that do not scatter. We say that a connected open set $D \subset \mR^n$ is a \emph{quadrature domain} (for harmonic functions) if there is a compactly supported distribution $\mu$ in $\mR^n$ with $\supp(\mu) \subset D$ such that 
\begin{equation} \label{quadrature_condition}
\int_D H \,dx = \int_{D} H \,d\mu
\end{equation}
whenever $H \in L^1(D)$ is harmonic in $D$.  A basic example is a ball $B(a,r) \subset \mR^n$ with $\mu = \abs{B_r} \delta_a$, so that \eqref{quadrature_condition} holds by the mean value theorem. There exist many examples of quadrature domains, and their boundaries can exhibit inward cusps (see \cite[Chapter 14]{Davis} or \cite{Sakai1982} for examples). One example is the cardioid domain $D = \{ w + \frac{1}{2} w^2 \,:\, w \in \mathbb{D} \} \subset \mR^2$ that has an inward cusp, see \cite[Figure 0.1]{PetrosyanShahgholianUraltseva} (though note that $\p D$ is the image of $S^1$ by an analytic map).

\begin{Theorem} \label{thm_fb_example_quadrature_intro}
Let $\Omega \subset \mR^n$ be a bounded open set, let $\lambda=0$, and let $D \subset \mR^n$ be a quadrature domain such that $\ol{D} \subset \Omega$. Then there is a contrast for $D$ that admits an incident wave that does not scatter.
\end{Theorem}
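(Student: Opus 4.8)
\subsection*{Proof proposal}

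The plan is to realize $u_0$ and $u_q$ explicitly as the constant function $1$ and a small perturbation of it, with the perturbation built from the Newtonian potential attached to the quadrature domain; the hypothesis $\lambda=0$ enters decisively here. Let $E$ be the fundamental solution of the Laplacian ($\Delta E=\delta_0$), and write $U^{\nu}:=E*\nu$ for the potential of a compactly supported distribution $\nu$, so $\Delta U^{\nu}=\nu$. Fix a quadrature distribution $\mu$ for $D$ as in \eqref{quadrature_condition} and set $\delta:=\mathrm{dist}(\supp\mu,\p D)>0$. The first step, and the one I expect to be the only genuinely non-routine point, is to replace $\mu$ by a smooth density. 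Pick a nonnegative radial $\rho\in C^\infty_c(B(0,1))$ with $\int\rho=1$, put $\rho_\eps(x)=\eps^{-n}\rho(x/\eps)$ and $\mu_\eps:=\mu*\rho_\eps$. For $\eps<\delta$ one has $\mu_\eps\in C^\infty_c(D)$, and I claim $\mu_\eps$ is again a quadrature distribution for $D$: if $H\in L^1(D)$ is harmonic in $D$, then $H$ is harmonic on an $\eps$-neighborhood of $\supp\mu$, so the mean value property gives $H*\rho_\eps=H$ there, whence
\[
\int_{\mR^n}H\,\mu_\eps\,dx=\langle\mu,\,H*\rho_\eps\rangle=\langle\mu,H\rangle=\int_{D}H\,dx.
\]

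Next I would set $w:=U^{\chi_D}-U^{\mu_\eps}$, so $\Delta w=\chi_D-\mu_\eps$ is bounded with support in $\ol D$. Applying the quadrature identity to $y\mapsto E(x-y)$ — which for $x\notin\ol D$ is harmonic in a neighborhood of $\ol D$ and lies in $L^1(D)$ — gives $w(x)=0$ for every $x\in\mR^n\setminus\ol D$, so $\supp w\subset\ol D\subset\subset\Omega$. Since $\Delta w\in L^\infty$ with compact support, elliptic (Calder\'on--Zygmund) estimates give $w\in W^{2,p}(\mR^n)$ for all $p<\infty$; in particular $w\in H^2_0(\Omega)\cap L^\infty(\mR^n)$. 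Finally, because $\eps<\delta$ the density $\mu_\eps$ vanishes on a neighborhood $V$ of $\p D$, so that $\Delta w=\chi_D$ on $V$.

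Now choose $t>0$ with $t\norm{w}_{L^\infty}\le 1/2$ and set
\[
u_0:=1,\qquad u_q:=1+tw,\qquad h:=-\,\frac{t(1-\mu_\eps)}{1+tw},\qquad q:=h\chi_D.
\]
Since $1+tw\in[1/2,3/2]$ we get $h\in L^\infty(\mR^n)$, with $h=-t$ on $\mR^n\setminus\ol D$ (there $\mu_\eps=w=0$); and on $V$, where $\mu_\eps$ vanishes, $\abs{h}=t/(1+tw)\ge 2t/3>0$ a.e., so $h$ is a contrast for $D$. Then $qu_q=-t(1-\mu_\eps)\chi_D=-t(\chi_D-\mu_\eps)$ because $\supp\mu_\eps\subset D$, while $\Delta u_q=t\Delta w=t(\chi_D-\mu_\eps)$; hence $(\Delta+q)u_q=0$ in $\mR^n$. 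Together with $\Delta u_0=0$, $u_q-u_0=tw\in H^2_0(\Omega)$, and the fact that $u_0,u_q$ are nontrivial (indeed $u_q\ge 1/2$), this exhibits $u_0$ as an incident wave that does not scatter for the contrast $h$. (Necessarily $w\not\equiv 0$, for otherwise $q\equiv 0$, contradicting $\abs{q}\ge c>0$ near $\p D$ in $D$.)

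The one place where care is needed is the regularization: one must verify that mollifying $\mu$ preserves the quadrature identity \eqref{quadrature_condition}, which is exactly where the mean value property is used, and this is what lets us force $\Delta w\in L^\infty$ (so $q\in L^\infty$) while keeping $\Delta w=\chi_D$ near $\p D$ (so $h$ is non-degenerate there). Everything else — vanishing of $w$ off $\ol D$ via the quadrature identity, membership in $H^2_0(\Omega)$ via elliptic regularity, and the bounds on $h$ — is routine. The construction relies on $\lambda=0$ through the choice $u_0\equiv 1$, and the positive-frequency analogue would require a genuinely different argument.
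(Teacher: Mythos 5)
Your proposal is correct. The core idea is the same one the paper uses: form the potential of $\chi_D-\mu$ and exploit the quadrature identity, applied to $y\mapsto E(x-y)$ for $x\notin\ol D$, to see that this potential vanishes off $\ol D$. But the way you complete the argument is genuinely different. The paper keeps the original (possibly rough) distribution $\mu$, uses $u=G\ast(\chi_D-\mu)$ only in a neighborhood of $\p D$ (invoking $C^{1,1}$ regularity for the no-sign obstacle problem there), builds a local solution $v_0=u+u_0$ with $h_0=-1/v_0$, and then extends into $D$ by the cutoff/gluing Lemma \ref{lemma_local_global}, which interpolates $v_0$ with the constant $1$ and defines $h=-\Delta v/v$ inside; this yields the stronger Theorem \ref{thm_fb_example_quadrature}, valid for every harmonic $u_0$ positive on $\p D$. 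You instead mollify $\mu$ first — and your verification that $\mu\ast\rho_\eps$ is again a quadrature density (mean value property plus the fact that pairing a compactly supported distribution against a smooth function only depends on its values near the support) is sound — so that $w=E\ast(\chi_D-\mu_\eps)$ is globally $W^{2,p}$, and then take $u_q=1+tw$ with $t$ small and the explicit contrast $h=-t(1-\mu_\eps)/(1+tw)$. This buys a completely self-contained, elementary proof: no citation of obstacle-problem regularity and no gluing lemma, with all the required properties ($(\Delta+q)u_q=0$, $u_q-u_0=tw\in H^2_0(\Omega)$, $\abs h\ge 2t/3$ near $\p D$) verified by direct computation. What it gives up is generality: the smallness trick hinges on the incident wave being $u_0\equiv 1$ (so that $u_0+tw$ stays positive throughout $\ol D$), whereas the paper's gluing handles any harmonic $u_0$ that is merely positive on $\p D$; since Theorem \ref{thm_fb_example_quadrature_intro} only asks for the existence of one non-scattering incident wave, your argument fully proves the stated result.
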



As non-scattering incident waves in Theorems \ref{thm_fb_example_analytic_intro} and \ref{thm_fb_example_quadrature_intro}, one can choose any solution of 
\begin{equation} \label{heq}
(\Delta + \lambda^2) u_0 = 0 \text{ in $\Omega$}
\end{equation}
such that $u_0$ is positive on $\p D$. A nonvanishing condition for $u_0$ on $\p D$ will be important for many results in this article (with the exception of Theorem  \ref{Thm:FB-reg1}), and it is of interest to determine if such solutions $u_0$ exist. 
They always do when $\lambda = 0$ (take $u_0 \equiv 1$) or when $u_0$ is allowed to be complex valued (take $u_0 = e^{i \lambda x_1}$). However, by Lemma \ref{lemma_helmholtz_local_zeros} any real valued solution of \eqref{heq} has a zero in any ball of radius $\geq c_n/\lambda$ and the nonvanishing condition on $\p D$ is nontrivial in this case. In fact if $\lambda$ is a Dirichlet eigenvalue of $-\Delta$ in $D$ then solutions $u_0$ satisfying the nonvanishing condition may not exist (see Remark \ref{remark_nonvanishing_false}). On the positive side we will show the following result.

\begin{Theorem} \label{thm_helmholtz_nonvanishing}
Let $D$ be a bounded $C^1$ domain, or Lipschitz domain when $n=2,3$, with $\mR^n \setminus \ol{D}$ connected. Suppose that $\lambda > 0$ is not a Dirichlet eigenvalue of $-\Delta$ in $D$. Then there is a real valued solution $u_0$ of $(\Delta + \lambda^2) u_0 = 0$ in $\mR^n$ such that $u_0$ is positive on $\p D$.
\end{Theorem}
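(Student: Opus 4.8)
The plan is to prove the following density statement, from which the theorem follows at once: the subspace
\[
\mathcal{C} := \{\, u_0|_{\p D} \,:\, u_0 \in C^\infty(\mR^n),\ u_0 \text{ real valued},\ (\Delta+\lambda^2)u_0=0 \text{ in }\mR^n \,\}
\]
is dense in $C(\p D;\mR)$. Real entire solutions exist (e.g.\ $\cos(\lambda x_1)$), so $\mathcal{C}$ is a genuine linear subspace; if it is dense then, since the constant function $1$ belongs to $C(\p D)$, one finds a real solution $u_0$ of $(\Delta+\lambda^2)u_0=0$ in $\mR^n$ with $\sup_{\p D}\abs{u_0-1}<\half$, and such a $u_0$ is $>\half$ on $\p D$, as required.

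To prove density I would argue by contradiction and duality (in the spirit of Runge approximation). If $\mathcal{C}$ were not dense, Hahn--Banach would give a nonzero real signed Radon measure $\mu$ on $\p D$ annihilating $\mathcal{C}$, i.e.\ $\int_{\p D}u_0\,d\mu=0$ for every real entire solution $u_0$. Testing against $\cos(\lambda\omega\cdot x)$ and $\sin(\lambda\omega\cdot x)$, $\omega\in S^{n-1}$, shows that $\widehat{\mu}$ vanishes on the sphere $\{\abs{\xi}=\lambda\}$. Here $\lambda>0$ is essential: let $\Phi^+$ be the outgoing (Sommerfeld) fundamental solution of $\Delta+\lambda^2$ and put $U:=\Phi^+*\mu$, the single layer potential of $\mu$ over $\p D$. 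Then $U\in L^1_{\mathrm{loc}}(\mR^n)$, $(\Delta+\lambda^2)U=-\mu$, and since $\supp\mu\subset\p D$, $U$ is a radiating solution of the Helmholtz equation in $\mR^n\setminus\p D$. Its far field pattern is a nonzero constant multiple of $\widehat{\mu}(\lambda\,\cdot\,)|_{S^{n-1}}$, hence vanishes identically; by Rellich's lemma and unique continuation $U\equiv 0$ in the unbounded component of $\mR^n\setminus\p D$, and since $\mR^n\setminus\ol D$ is connected this gives $U\equiv 0$ in $\mR^n\setminus\ol D$.

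The remaining step is to propagate this through $\p D$: one wants the Dirichlet trace of $U|_D$ on $\p D$ to vanish. Granting that, $U|_D$ solves $(\Delta+\lambda^2)U=0$ in $D$ with zero Dirichlet data, so $U\equiv 0$ in $D$ because $\lambda^2$ is not a Dirichlet eigenvalue of $-\Delta$ in $D$; then $U\equiv 0$ in $\mR^n\setminus\p D$, hence a.e., and $\mu=-(\Delta+\lambda^2)U=0$, contradicting $\mu\neq 0$. The cleanest way to see the trace vanishes is that $U$, being $0$ in $\mR^n\setminus\ol D$ and solving $(\Delta+\lambda^2)U=-\mu$ with $\mu$ a measure, has no ``dipole'' (double layer) component along $\p D$, so pairing the equation with test functions through Green's identity on $D$ forces the Dirichlet trace of $U|_D$ to be $0$.

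The hard part will be making this last step rigorous: one must know that the single layer potential of an \emph{a priori arbitrary} finite measure supported on $\p D$ is regular enough near $\p D$ for its interior trace to be defined and controlled by its (vanishing) exterior trace. This is precisely where the regularity hypotheses enter — $C^1$ boundary in all dimensions, or Lipschitz boundary when $n=2,3$ — via the mapping properties of layer potentials on such domains and the unique solvability of the interior Dirichlet problem. It is also consistent with the failure of the statement when $\lambda^2$ is a Dirichlet eigenvalue (Remark \ref{remark_nonvanishing_false}): there the zero extension of a Dirichlet eigenfunction $\phi$ of $D$ is exactly such a $U$ with $\mu=-(\p_\nu\phi)\,d\sigma\neq 0$, so $\mathcal{C}$ genuinely fails to be dense.
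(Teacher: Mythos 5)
Your overall duality strategy is close in spirit to the paper's, but the decisive step is exactly the one you leave open, and it is a genuine gap rather than a routine technicality. When you dualize against $C(\p D;\mR)$, the annihilating object is an arbitrary finite signed measure $\mu$ on $\p D$, and the single layer potential $U=\Phi^+ \ast\mu$ of such a measure can be genuinely rough: if $\mu$ has an atom at $x_0\in\p D$ then $U$ blows up like $\abs{x-x_0}^{2-n}$, and in general one only gets $U\in W^{1,p}_{\mathrm{loc}}(\mR^n)$ for $p<n/(n-1)$. So ``the interior trace of $U$ vanishes because there is no double layer component'' is not something you can read off from classical jump relations (which are proved for, say, $L^2$ densities on Lipschitz boundaries), and even granting a vanishing trace in a weak sense, you still need \emph{uniqueness of the zero-trace Dirichlet problem for $\Delta+\lambda^2$ in $D$ in a class far below $H^1$} (roughly $W^{1,p}$ with $p<n/(n-1)$). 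That is an endpoint well-posedness question of Jerison--Kenig type; it is precisely where the hypotheses ``$C^1$, or Lipschitz when $n=2,3$'' would have to be invoked, and for general Lipschitz domains in dimensions $n\geq 4$ the required range of exponents is not available, so the argument as sketched could not be completed there. In short, the conclusion $U\equiv 0$ in $D$ (hence $\mu=0$) is asserted, not proved, and proving it is the hard core of your approach.

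For comparison, the paper sidesteps this entirely by not trying to prove density of boundary traces in $C(\p D)$. It first solves the interior problem $v=1+w$, $w\in H^1_0(D)$, $(\Delta+\lambda^2)w=-\lambda^2$, and uses the Jerison--Kenig estimates to get $v\in W^{1,p}(D)$ for some $p>n$, so $v\in C(\ol D)$ with $v|_{\p D}=1$ (this is the only place the boundary regularity and the non-eigenvalue hypothesis are used). It then proves a Runge approximation theorem in $H^{1,p}(D)$: in the dual argument the annihilating functional is a distribution $\mu\in W^{-1,p'}(\mR^n)$ supported in $\ol D$, one sets $w=G_\lambda\ast\mu$, kills the far field by testing against Herglotz waves, and applies Rellich plus unique continuation to get $w=0$ outside $\ol D$; the final step needs only that a $W^{1,p'}$ function vanishing outside $\ol D$ can be approximated by $C^\infty_c(D)$ functions (true for $C^0$ domains), with no boundary trace theory, no jump relations, and no eigenvalue condition. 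If you want to salvage your route, you would essentially have to re-import those same $W^{1,p}$ estimates to handle the measure-dual object, at which point the paper's formulation is both simpler and more robust. Your closing observation about the eigenvalue case (Remark \ref{remark_nonvanishing_false}) is correct and is a nice consistency check, but it does not substitute for the missing step.
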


Theorems \ref{thm_fb_example_analytic_intro} and \ref{thm_fb_example_quadrature_intro} are not difficult to prove, and they are analogous to certain facts in the theory of free boundary problems. As mentioned above the connection between single measurement inverse problems and free boundary methods is classical in the Calder\'on problem \cites{AlessandriniIsakov, AthanasopoulosCaffarelliSalsa}. Curiously, it seems that for non-scattering phenomena this connection was only pointed out very recently in \cite{CakoniVogelius}. The main point is the following: if an obstacle $D$ admits an incident wave that does not scatter, then $\p D$ can be understood as a free boundary in a certain obstacle type problem. This observation was used in \cite{CakoniVogelius} to show that if $D$ has Lipschitz boundary and the incident wave $u_0$ is nonvanishing on $\p D$, then necessarily $\p D$ must be real-analytic (resp.\ $C^{k+1,\alpha}$) if the contrast is real-analytic (resp.\ $C^{k,\alpha}$).

We prove a corresponding result where the a priori assumption that $D$ has Lipschitz boundary is removed. However, as indicated by Theorem \ref{thm_fb_example_quadrature_intro}, one must then allow for the possibility that $D$ has inward cusps. We first need to introduce the concept of minimal diameter. 

For any set $K$, we define  $\operatorname{MD} (K )$ to be the minimal diameter of $K$, i.e., 
 the infimum of distances between pairs
of parallel planes such that $K$ is contained in the strip determined by
the planes. For any ball $B(z,r)$ we also define the thickness function 
\[
\delta_r (K,z):=\frac{\operatorname{MD}(K\cap B(z,r))}{r}.
\]
To illustrate this notion, note that if $D \subset \mR^n$ is a bounded Lipschitz domain, then there are $c, r_0 > 0$ such that 
\[
\delta_r(\mR^n \setminus D, x^0) \geq c \text{ whenever $x^0 \in \p D$ and $0 < r < r_0$.}
\]
On the other hand, if $D \cap B(0,1) = \{ x_n < \abs{x'}^{1/\gamma} \} \cap B(0,1)$ where $\gamma > 1$, so that $D$ has an inward cusp at $0$, one can check that 
\[
\delta_r(\mR^n \setminus D, 0) \leq C r^{\gamma-1}, \qquad 0 < r < 1.
\]

We first state the following result showing that if $D$ admits an incident wave $u_0$ that does not scatter, and if both the contrast $h$ and $u_0$ are nonvanishing at a point $x^0 \in \p D$, then there are two possibilities: either $D$ is regular near $x^0$, or the complement of $D$ is thin near $x^0$.

\begin{Theorem}\label{Thm:FB-reg-dichotomy}
Let $\Omega \subset \mR^n$ be a bounded open set, and suppose that $u_q, u_0 \in H^1(\Omega)$ satisfy \eqref{eq1}--\eqref{eq2} where $q = h \chi_D$ for some solid domain $D$ with $\ol{D} \subset \Omega$. Assume that $h$ is Dini continuous (i.e.\ $h$ has a modulus of continuity $\omega$ with $\int_0^1 \omega(r) \,d(\log r) < \infty$). For any $x^0 \in \p D$ such that 
\[
h(x^0) u_0(x^0) \neq 0,
\]
one of the following conditions holds:
\begin{enumerate}
\item[(a)] 
$\limsup_{r \to 0} \delta_r(\Omega \setminus D, x^0) > 0$, and $D$ is locally a $C^1$ domain near $x^0$; or
\item[(b)] 
$\Omega \setminus D$ is thin near $x^0$ in the sense that $\lim_{r \to 0} \delta_r(\Omega \setminus D, x^0) = 0$.
\end{enumerate}
If $h$ is additionally assumed to be Lipschitz (resp.\ $C^{k,\alpha}$ where $k \geq 1$ and $0 < \alpha < 1$, or real-analytic) near $x^0$ and if (a) holds, then $D$ is locally a $C^{1,\alpha}$ (resp.\ $C^{k+1,\alpha}$, or real-analytic) domain near $x^0$.
\end{Theorem}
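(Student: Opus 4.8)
The plan is to realise $w := u_q - u_0$ as the solution of a no-sign obstacle-type problem near $x^0$ and then to invoke the minimal-diameter (thickness) dichotomy for such problems from \cite{PetrosyanShahgholianUraltseva}, the higher regularity being handled as in \cite{CakoniVogelius}. \emph{Local reduction.} Extend $w$ by zero to $\mR^n$; since $w\in H^2_0(\Omega)$ the extension lies in $H^2(\mR^n)$ and satisfies $(\Delta+\lambda^2)w = -h\chi_D u_q$ in $\mR^n$ (no boundary terms arise). On $\mR^n\setminus\ol{D}$ this reads $(\Delta+\lambda^2)w = 0$, and since $w$ vanishes on the nonempty open set $\mR^n\setminus\ol{\Omega}\subset\mR^n\setminus\ol{D}$ while $\mR^n\setminus\ol{D}$ is connected, unique continuation for $\Delta+\lambda^2$ forces $w\equiv 0$ on $\mR^n\setminus\ol{D}$. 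Because $D$ is a solid domain, every point of $\p D$ lies in $\ol{\mR^n\setminus\ol{D}}$ (a ball missing $\mR^n\setminus\ol D$ would lie in $\operatorname{int}(\ol D)=D$), so $w$ and $\nabla w$ vanish on $\Omega\setminus D$ and on $\p D$.

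\emph{Obstacle structure.} Near $x^0$ we write $\Delta w = f\chi_D$ with $f := -(\lambda^2+h)w - hu_0$. Since $h\in L^\infty$ and $u_0$ is real-analytic (it solves the Helmholtz equation), a bootstrap with interior Calder\'on--Zygmund estimates and Sobolev embedding gives $\Delta w\in L^\infty_{loc}$ and $w\in C^{1,\alpha}_{loc}$; hence $w(x^0)=0$, and $f$ is Dini continuous near $x^0$ with $f(x^0)=-h(x^0)u_0(x^0)\neq 0$. Setting $\Lambda := \{w=0\}\cap\{\nabla w=0\}$ and using that $D^2w=0$ a.e.\ on $\Lambda$, we obtain $\Delta w = f\,\chi_{B(x^0,r_0)\setminus\Lambda}$ a.e., i.e.\ a no-sign obstacle-type problem with coincidence set $\Lambda$ and a Dini continuous right-hand side nonvanishing at $x^0$. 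Two observations: since $f$ is continuous and $f(x^0)\neq0$, $w$ cannot vanish on any open subset of $D$ near $x^0$, so $\{w\neq0\}$ accumulates at $x^0$ and, as $x^0\in\Lambda$, $x^0\in\p\{w\neq0\}$ is a free boundary point; and $\Omega\setminus D\subset\Lambda$, whence $\delta_r(\Lambda,x^0)\geq\delta_r(\Omega\setminus D,x^0)$ for small $r$.

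\emph{Dichotomy and upgrades.} The free boundary regularity theory of \cite{PetrosyanShahgholianUraltseva} now gives, at the free boundary point $x^0$, one of two alternatives: $\lim_{r\to0}\delta_r(\Lambda,x^0)=0$; or $x^0$ is a regular point, $\p\Lambda$ is a $C^1$ graph near $x^0$, and $\{w\neq0\}$ agrees near $x^0$ with one side of that graph. If $\limsup_{r\to0}\delta_r(\Omega\setminus D,x^0)>0$, then the first alternative is ruled out by $\delta_r(\Lambda,x^0)\geq\delta_r(\Omega\setminus D,x^0)$, so the second holds; combining it with $\Omega\setminus D\subset\Lambda$ and with the fact that $w$ vanishes on no open subset of $D$ near $x^0$ forces $D$ to coincide near $x^0$ with the open side of the graph, so $D$ is locally a $C^1$ domain there — this is case (a), and its negation is exactly case (b). In case (a), the smoothness upgrades proceed as in \cite{CakoniVogelius}: once $\p D$ is $C^1$ and $w$ and $\nabla w$ vanish on $\p D$, boundary elliptic estimates improve the regularity of $w$ up to $\p D$, hence that of $f=-(\lambda^2+h)w-hu_0$, and iterating the higher-order free boundary regularity (with a hodograph--Legendre transform, in the style of Kinderlehrer--Nirenberg, for the analytic case) yields $\p D\in C^{1,\alpha}$, $C^{k+1,\alpha}$, or real-analytic according to the regularity of $h$ near $x^0$.

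\emph{Main obstacle.} I expect the crux, beyond citing \cite{PetrosyanShahgholianUraltseva} in the correct form, to be the reconciliation of the coincidence set $\Lambda(w)$ with the complement $\Omega\setminus D$: the natural thickness hypothesis in the free boundary theory concerns $\Lambda$, whereas the theorem is stated in terms of $\Omega\setminus D$, and a priori $w$ could have zeros inside $D$. Showing that at a regular point the two sets agree locally — so that the $C^1$ (and higher) regularity of $\p\Lambda$ transfers to $\p D$ — is where the nonvanishing of $f$ at $x^0$ and the solid-domain condition on $D$ must be used carefully. A secondary technical point is that $f$ depends on the unknown $w$, so the higher-order regularity of $\p D$ comes from a coupled bootstrap rather than from free boundary theory with a prescribed right-hand side.
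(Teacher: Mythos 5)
Your proposal is correct and follows essentially the same route as the paper: extend $u_q-u_0$ by zero, use unique continuation and the nonvanishing of $f=-hu_q$ at $x^0$ to recast $\p D$ locally as the free boundary of a no-sign obstacle-type problem, apply the thickness-implies-$C^1$ criterion (whose contrapositive is exactly the (a)/(b) dichotomy, transferred via $\Omega\setminus D\subset\{w=|\nabla w|=0\}$ and monotonicity of the minimal diameter), and upgrade regularity by the Kinderlehrer--Nirenberg hodograph method as in \cite{CakoniVogelius}. The only adjustment needed is the citation: for Dini continuous $h$ the thickness criterion you invoke is \cite{AnderssonLindgrenShahgholian}*{Theorem 1.3} (stated as Theorem \ref{Thm:FB-reg} in the paper), since \cite{PetrosyanShahgholianUraltseva} covers Lipschitz right-hand sides; with that substitution your argument coincides with the paper's.
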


The case where $h$ is Dini continuous is a consequence of the following more precise result from \cite[Theorem 1.3]{AnderssonLindgrenShahgholian}:

\begin{Theorem}\label{Thm:FB-reg}
Retain the hypotheses of Theorem \ref{Thm:FB-reg-dichotomy}, with  $h$ being a Dini continuous function. Suppose for some $r_1>0 $,  $x^0 \in \partial D$   we have 
\begin{equation}\label{eq:u0-1}
h(x) u_0(x) \neq 0 \qquad    \hbox{for } \quad  x \in  B(x^0,r_1) .
\end{equation}
Then there exists  a modulus of continuity $\sigma (r)$, and a universal constant $\tau >0$, 
such that 
if for some $0 < r_0 < r_1$, we have $\sigma (r_0 ) < \delta_{r_0} (\{u_0=u_q\},x^0) $ then 
$\partial D \cap B(x^0, \tau r_0)$ is a $C^1$-graph.
\end{Theorem}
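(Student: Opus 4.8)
The plan is to reduce the statement to the free boundary regularity result \cite[Theorem 1.3]{AnderssonLindgrenShahgholian} for the no-sign obstacle problem, after showing that the non-scattering condition turns $\p D$ into the free boundary near $x^0$. Set $w := u_q - u_0 \in H^2_0(\Omega)$. Subtracting the two equations in \eqref{eq1} and using $q = h\chi_D$ gives $(\Delta + \lambda^2) w = -h\chi_D u_q$ in $\Omega$; in particular $w$ solves the homogeneous Helmholtz equation in the open set $\Omega \setminus \overline D$. Since $\overline D \subset \Omega$ and $\mR^n \setminus \overline D$ is connected, $\Omega \setminus \overline D$ is a connected open set abutting $\p\Omega$, and there $w$ has vanishing Cauchy data because $w \in H^2_0(\Omega)$. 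By unique continuation for second order elliptic operators with bounded zeroth order term, $w \equiv 0$ in $\Omega \setminus \overline D$. Extending $w$ by zero we get $w \in H^2(\mR^n)$ with $w = 0$ on $\mR^n \setminus D$, and $\Delta w = -h u_0 - (\lambda^2 + h) w$ a.e.\ in $D$.

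Next I would put this in the form of a no-sign obstacle problem on $B(x^0,r_1)$. Elliptic regularity (the right-hand side is bounded) gives $w \in W^{2,p}_{\mathrm{loc}}$ for every $p < \infty$, hence $w \in C^{1,\alpha}_{\mathrm{loc}}$; since $h$ is Dini continuous and $u_0$ is real-analytic (as a solution of a constant-coefficient elliptic equation), the function $f := -(h u_0 + (\lambda^2 + h) w)$ is Dini continuous on $B(x^0,r_1)$, and $f(x^0) = -h(x^0)u_0(x^0) \neq 0$ since $w$ vanishes on $\p D \ni x^0$. After possibly replacing $w$ by $-w$ and shrinking $r_1$ we may assume $f \geq c > 0$ on $B(x^0,r_1)$. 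On $\{w = 0\}$ one has $\Delta w = 0$ a.e.\ (a standard property of $W^{2,1}_{\mathrm{loc}}$ functions), whereas the equation forces $\Delta w = -h u_0$ a.e.\ on $\{w=0\} \cap D$; as $h u_0 \neq 0$ there, the set $\{w = 0\} \cap D \cap B(x^0,r_1)$ has measure zero. Hence $\chi_D = \chi_{\{w \neq 0\}}$ a.e.\ in $B(x^0,r_1)$, so
\[
w \in W^{2,p}_{\mathrm{loc}}, \qquad \Delta w = f\,\chi_{\{w \neq 0\}} \quad \text{in } B(x^0,r_1),
\]
is a no-sign obstacle problem with non-degenerate Dini data, with $\{w=0\} = \{u_0 = u_q\}$ and with $\{w\neq 0\}$ open and dense in $D \cap B(x^0,r_1)$, so that the free boundary $\p\{w\neq 0\}$ and $\p D$ agree near $x^0$.

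Finally I would invoke \cite[Theorem 1.3]{AnderssonLindgrenShahgholian} verbatim: for such a no-sign obstacle problem there exist a modulus of continuity $\sigma$ and a universal constant $\tau > 0$ such that, if the coincidence set is thick at scale $r_0$ in the sense that $\sigma(r_0) < \delta_{r_0}(\{w = 0\}, x^0)$, then $\p\{w \neq 0\} \cap B(x^0, \tau r_0)$ is a $C^1$-graph; in particular the negligible discrepancy between $\{w\neq 0\}$ and $D$ cannot meet a ball where the positivity set is the full side of a graph, so $\p D \cap B(x^0,\tau r_0)$ is the same $C^1$-graph. Translating back via $\{w = 0\} = \{u_0 = u_q\}$ gives exactly the assertion.

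The unique continuation step, the zero-extension and the elliptic regularity of $w$ are routine; the one point that needs care — and is the crux of the reduction — is verifying that near $x^0$ the non-scattering condition genuinely produces a no-sign obstacle problem with non-degenerate Dini right-hand side, i.e.\ that $\{w = 0\}\cap D$ is negligible near $x^0$ so that $\p\{w\neq 0\}$ may be identified with $\p D$. Once this is in place, the theorem is an immediate consequence of \cite[Theorem 1.3]{AnderssonLindgrenShahgholian}, which furnishes both $\sigma$ and $\tau$.
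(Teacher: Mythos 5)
Your proposal is correct and follows essentially the paper's own route: reduce, via unique continuation and the zero-extension of $w=u_q-u_0$, to a no-sign obstacle problem $\Delta w = f\chi$ near $x^0$ with nondegenerate Dini right-hand side (the paper identifies the set as $\mathrm{int}(\supp w)$ using solidness, you identify $\chi_D=\chi_{\{w\neq 0\}}$ a.e.\ via the ``$\Delta w=0$ a.e.\ on $\{w=0\}$'' lemma, an equivalent bookkeeping), and then quote \cite[Theorem 1.3]{AnderssonLindgrenShahgholian} verbatim for $\sigma$ and $\tau$. One small correction: $\Omega\setminus\ol{D}$ need not be connected (e.g.\ if $\Omega$ is disconnected), so the unique continuation step should be run, as in the paper, on the connected set $\mR^n\setminus\ol{D}$ after extending $w\in H^2_0(\Omega)$ by zero, using that $w$ vanishes on $\mR^n\setminus\ol{\Omega}$.
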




If $h$ is Lipschitz, or if $h$ is $C^{1,1}$ and $u_0$ vanishes on $\p D$ but $\nabla u_0$ is nonvanishing, we also have the following result.

\begin{Theorem}\label{Thm:FB-reg1}

Retain the hypotheses of Theorem \ref{Thm:FB-reg-dichotomy}, and suppose that either of the following conditions is satisfied:
\begin{enumerate}
\item $h\in C^{0,1}(B(x^0,r))$, 
\item   $h \in C^{1,1} (B(x^0,r))$  and   condition \eqref{eq:u0-1} is replaced by
\begin{equation}\label{eq:u0-2}
 \abs{h} > 0 \quad \hbox{and} \quad u_0= 0 \quad  \hbox{and}  \quad |\nabla u_0 | >0  \quad \hbox{on } \partial D \cap B(x^0,r).
\end{equation}
\end{enumerate}
 Then, there exists $r_0 >0 $ such that one of the following holds:
\begin{enumerate}
\item[(a)]  $\partial D$ is a  $C^{1,\alpha}$  graph  in  $B(x^0,r_0)$.
\item[(b)]  In a translated and rotated system of coordinates 
\[ \partial D \cap B(x^0,r_0)\subset \{ x: \  |x_1| < k (x') \} , \]
where $x' = (x_2, \cdots, x_{n})$, and $k(x') \geq 0 $ is a $C^1$-function with $k(0') =0$.
\end{enumerate}

\end{Theorem}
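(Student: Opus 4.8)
The plan is to realize $\p D$ near $x^0$ as the free boundary of an obstacle-type equation for the difference $w := u_q - u_0$, and then to invoke the corresponding free boundary regularity theory, the two hypotheses (1)--(2) dictating the normal form of the equation. Extending $w$ by zero outside $\Omega$, conditions \eqref{eq1}--\eqref{eq2} give $w \in H^2(\mR^n)$ with $\supp w \subseteq \ol\Omega$ and $(\Delta + \lambda^2)w = -h\chi_D u_q$ in $\mR^n$. On $\mR^n \setminus \ol D$ the right-hand side vanishes while $w$ vanishes on the nonempty open set $\mR^n \setminus \ol\Omega$; since $D$ is a solid domain, $\mR^n \setminus \ol D$ is connected, so unique continuation for $\Delta+\lambda^2$ forces $w \equiv 0$ on $\mR^n \setminus \ol D$, whence $w = \abs{\nabla w} = 0$ on $\p D$ in the trace sense. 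Inserting $u_q = u_0 + w$ in $D$ then shows that in $B := B(x^0,r)$
\[
\Delta w = F(x,w)\,\chi_D, \qquad F(x,s) := -(\lambda^2 + h(x))\,s - h(x)u_0(x), \qquad w = \abs{\nabla w} = 0 \text{ on } \p D .
\]
Elliptic estimates give $w \in C^{1,1}(B)$ (resp.\ $C^{1,\alpha}(B)$ for Lipschitz $h$), and since $hu_0$ (resp.\ $h\,\nabla u_0$) is nonvanishing at $x^0$, the set $\{w=0\}\cap D$ has empty interior near $x^0$ — on such an interior $0=\Delta w = F(\cdot,0) = -hu_0$, a contradiction. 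Thus, up to this non-degeneracy, $D$ is near $x^0$ the positivity-type set of an obstacle problem, and it remains to run the free boundary machinery and to transfer regularity of $\p\{w\neq 0\}$ back to $\p D$.

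In case (1), $F(x^0,0) = -h(x^0)u_0(x^0)\neq 0$ and $x\mapsto F(x,0)$ is Lipschitz near $x^0$ because $h\in C^{0,1}(B)$ and $u_0$ is real-analytic. After translating $x^0$ to the origin, rescaling, and dividing by $F(0,0)$, $w$ solves $\Delta w = \chi_{\{w\neq 0\}}\bigl(\pm 1 + O(\abs{x})\bigr)$, i.e.\ the obstacle problem without a sign constraint with a Lipschitz coefficient. For this problem the blow-up analysis together with the Caffarelli--Karp--Shahgholian classification of global solutions yields the desired dichotomy at the free boundary point $x^0$: either the blow-up is a half-space solution — in which case the complement of $\{w\neq 0\}$ (which contains $\Omega\setminus D$) has positive density from both sides, the free boundary is a Lipschitz graph, and, using that $\{w\neq 0\}$ fills $D$ near $x^0$ and the Lipschitz regularity of the coefficient, $\p D$ is a $C^{1,\alpha}$ graph in some $B(x^0,r_0)$; or the complement has vanishing density at $x^0$, and the structure theorem places $\p D\cap B(x^0,r_0)$ inside a region $\{\abs{x_1} < k(x')\}$ with $k\ge 0$ of class $C^1$ and $k(0')=0$. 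This refines Theorem~\ref{Thm:FB-reg}, which for merely Dini $h$ gives only a $C^1$ graph, and only under an a priori thickness hypothesis on $\{u_0 = u_q\}$.

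In case (2) we have $F(x^0,0)=0$, so this normalization degenerates. Here \eqref{eq:u0-2} forces $\p D\cap B\subseteq\{u_0=0\}$, which near $x^0$ is a real-analytic hypersurface $\Gamma$ since $\nabla u_0$ is nonvanishing there, and $F(\cdot,0) = -hu_0\in C^{1,1}(B)$ vanishes on $\Gamma$ to exactly first order. The idea is to divide the equation by the nonvanishing factor $u_0/\mathrm{dist}(\cdot,\Gamma)$ (equivalently, to differentiate the equation for $w$ in a direction transverse to $\Gamma$), turning the problem into an obstacle-type problem whose right-hand side is comparable to the signed distance to the free boundary; the higher-order free boundary regularity of Andersson--Lindgren--Shahgholian (and its relatives) then again produces the regular/cusp dichotomy, with $\p D$ of class $C^{1,\alpha}$ in the regular case. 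The equation is used crucially to exclude pathologies: were $\p D\cap B$ to contain a relatively open piece of $\Gamma$ on which $w = \abs{\nabla w}=0$, Cauchy uniqueness for the elliptic equation satisfied by $w$ would give $w\equiv 0$ in the adjacent part of $D$, hence $hu_0\equiv 0$ and $u_0\equiv 0$ on a nonempty open set, which is impossible; so the only "fat" free boundary permitted in this case is an honest $C^{1,\alpha}$ (indeed real-analytic) portion of $\Gamma$.

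I expect the main obstacle to be case (2): carrying out the division/linearization so that the equation exactly matches the hypotheses of an available free boundary regularity theorem — the coefficient there must be Dini (or Lipschitz) and bounded below away from the free boundary, which is delicate precisely because $F(\cdot,0)$ itself vanishes on $\p D$ — and, in both cases, establishing the non-degeneracy that identifies $\{w\neq 0\}$ with $D$ near $x^0$, so that the free boundary regularity of $\p\{w\neq 0\}$ is genuinely regularity of $\p D$. Once these points are in place, the conclusion is an application of the structure theorems discussed above.
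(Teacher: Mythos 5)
Your case (1) is, in substance, the paper's argument: reduce to the no-sign obstacle problem $\Delta u = f\chi_{\{u\neq 0\}}$ with $f$ Lipschitz and nonvanishing at $x^0$, and read the dichotomy off the structure theory. The paper gets alternative (b) from the uniqueness of blow-up polynomials at singular points (Caffarelli--Shahgholian, structure of the singular set), which forces the free boundary to approach the zero plane of the blow-up tangentially, and gets $C^{1,\alpha}$ in the regular case by combining Theorem \ref{Thm:FB-reg} (which gives $C^1$), the resulting local sign of $u$, and the Allen--Shahgholian boundary Harnack principle. Your blow-up/classification route uses the same machinery; it is only sketched loosely where the $C^1$ function $k$ in (b) is actually produced, but that is a matter of detail rather than of approach.

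The genuine gap is case (2). Your primary plan --- divide by $u_0/\mathrm{dist}(\cdot,\Gamma)$, or regard the right-hand side as ``comparable to the signed distance'' and appeal to a degenerate/higher-order free boundary theorem --- does not match any available result (the paper explicitly remarks that the degenerate-force situation is only partially understood, citing Yeressian), and dividing the solution by a vanishing factor changes the operator rather than producing an admissible right-hand side; you yourself flag this step as unresolved. You mention in passing the correct move (differentiating transversally to $\Gamma$) but then mischaracterize its outcome as degenerate. The missing observation is that differentiation makes the problem \emph{nondegenerate}: set $e=\nabla u_0(x^0)\neq 0$ and $v=\partial_e u$ with $u=u_q-u_0$. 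Differentiating $(\Delta+\lambda^2)u=-hu_q\chi_D$ (legitimate across $\partial D$ because $hu_q$ vanishes there) gives $(\Delta+\lambda^2)v=-\bigl((\partial_e h)\,u_q+h\,\partial_e u_q\bigr)\chi_D$ near $x^0$; the new right-hand side is Lipschitz (using $h\in C^{1,1}$ and $u\in C^{1,1}$) and nonvanishing at $x^0$, since $u_q(x^0)=u_0(x^0)=0$ kills the first term while $h(x^0)\,\partial_e u_q(x^0)=h(x^0)\,\partial_e u_0(x^0)\neq 0$ by \eqref{eq:u0-2}. Since $v\equiv 0$ outside $\ol{D}$ and the nonvanishing right-hand side identifies $\supp(v)$ with $\ol{D}$ locally, $v$ satisfies exactly the hypotheses of case (1), and case (2) follows with no degenerate theory at all. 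Without this reduction (or a rigorous substitute for your division scheme), case (2) remains unproved in your proposal.
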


%
%
%
%
%
%
%

It is noteworthy that Theorem \ref{Thm:FB-reg1}  can be "calibrated" to the case where $u_0$ vanishes to a fixed higher order on some part of $\partial D$, by asking higher order regularity for the right hand side. Notwithstanding this, it remains a tantalizing problem when the higher order vanishing of $u_0$ takes place on isolated points of   $\partial D$. This remains to be studied in the future. See
 \cite{Yeressian} for partial results  in this direction.

\subsection{Connection to free boundary problems}\label{sec:fb-connection}

We now describe more precisely how the existence of an incident wave that does not scatter leads to a free boundary problem. Let $\Omega \subset \mR^n$ be a bounded open set, and suppose that $u_q, u_0 \in H^1(\Omega)$ satisfy \eqref{eq1}--\eqref{eq2} where $q = h \chi_D$ for some solid domain $D$ with $\ol{D} \subset \Omega$ and for $h \in C(\mR^n)$. Then $u_0$ is real-analytic, and also $u_q \in W^{2,p}_{\mathrm{loc}}(\Omega)$ for any $p < \infty$ by elliptic regularity. We write $u := u_q - u_0 \in H^2_0(\Omega)$ and extend $u$ by zero to $\mR^n$. Then $u \in H^2(\mR^n)$ satisfies 
\begin{equation} \label{fbp_eq1}
(\Delta + \lambda^2) u = f_0 \chi_D \text{ in $\mR^n$}
\end{equation}
where $f_0 = -h u_q$ near $\ol{D}$. Note also that since $u$ solves $(\Delta + \lambda^2) u = 0$ in $\mR^n \setminus \ol{D}$ and $u|_{\mR^n \setminus \ol{\Omega}} = 0$, unique continuation implies that $u|_{\mR^n \setminus \ol{D}} = 0$ using that $\mR^n \setminus \ol{D}$ is connected.

Suppose that 
\begin{equation} \label{fbp_hu_nonvanishing}
h(z) u_0(z) \neq 0 \text{ at some $z \in \p D$}.
\end{equation}
Since $u_q = u_0$ outside $\ol{D}$ and $u_q$ is continuous, we also have $f_0(z) \neq 0$.  We claim that 
\begin{equation} \label{fbp_eq2}
\ol{D} \cap B(z,r) = \supp(u) \cap B(z,r)
\end{equation}
whenever $r > 0$ is such that $\abs{h u_q} > 0$ in $B(z,r)$. Since $u|_{\mR^n \setminus \ol{D}} = 0$, one always has $\supp(u) \cap B(z,r) \subset \ol{D} \cap B(z,r)$. Conversely, let $x \in \ol{D} \cap B(z,r)$. If $x \notin \supp(u)$ then $u = 0$ near $x$, which by \eqref{fbp_eq1} implies that $f_0 = 0$ near $x$ which is impossible since $\abs{f_0} > 0$ in $B(z,r)$. This proves \eqref{fbp_eq2}.

Since $D$ is a solid domain, it follows from \eqref{fbp_eq2} that 
\[
D \cap B(z,r) = \mathrm{int}(\supp(u)) \cap B(z,r).
\]
Thus \eqref{fbp_eq1} implies that 
\begin{equation} \label{fbp_eq3}
(\Delta + \lambda^2) u = f_0 \chi_{\mathrm{int}(\supp(u))} \text{ in $B(z,r)$}, \qquad z \in \p(\mathrm{int}(\supp(u))).
\end{equation}
Writing $f = f_0 -\lambda^2 u$, so that $f(z) = f_0(z) \neq 0$, we may further write this as 
\begin{equation} \label{fbp_eq4}
\Delta u = f \chi_{\mathrm{int}(\supp(u))} \text{ in $B(z,r)$}, \qquad z \in \p(\mathrm{int}(\supp(u))).
\end{equation}
The last equation only involves $u$ and not $D$. Thus we have reduced our original problem to an \emph{obstacle problem} in free boundary theory, where locally near $z$ the obstacle is the set $\mathrm{int}(\supp(u))$ and its boundary $\p (\mathrm{int}(\supp(u)))$ can be understood as a free boundary. In the free boundary literature it is more customary to work with equations like  
\begin{equation} \label{fbp_eq5}
\Delta u = f \chi_{\{ u \neq 0 \}} \text{ in $B(z,r)$}, \qquad z \in \p(\{ u = \abs{\nabla u} = 0 \}).
\end{equation}
However, the methods for proving regularity of the free boundary in \eqref{fbp_eq5} also apply to \eqref{fbp_eq4}. See \cite{PetrosyanShahgholianUraltseva} for even more general equations when $f$ is assumed Lipschitz, or \cite{AnderssonLindgrenShahgholian} for $f$ Dini.

The standard obstacle problem corresponds to \eqref{fbp_eq5} for solutions $u \geq 0$ (and $f \equiv 1$ in the most classical case). In our case $u = u_q - u_0$ and it is not possible to assume that $u$ is nonnegative. This means that \eqref{fbp_eq5} corresponds to a \emph{no-sign obstacle problem}. The no-sign assumption on $u$ makes the analysis of this problem extremely hard and one has to resort to advanced tools such as monotonicity formulas along with strong geometric analysis. On the other hand if $\partial D$ is Lipschitz close to $z $ then it follows by standard free boundary techniques that $u$ has a sign in a vicinity of  $z$; see the beginning of the proof of Theorem 1.3 in \cite{AnderssonLindgrenShahgholian}. Thus the case of Lipschitz domains falls back to the regularity theory for the standard obstacle problem, which is rather classical \cite{Caffarelli1977}.

There is by now also a well developed theory for no-sign obstacle problems when $f$ is nonvanishing at the point $z$ of interest, i.e.\ when \eqref{fbp_hu_nonvanishing} holds. We refer to \cite{PetrosyanShahgholianUraltseva} for an account of this theory. After the reduction to \eqref{fbp_eq4}, Theorems \ref{Thm:FB-reg-dichotomy}--\ref{Thm:FB-reg1} follow rather directly from this theory. One can also accommodate the possibility that $f$ vanishes to some fixed order at each point of $\p D \cap B(z,r)$ (see Theorem \ref{Thm:FB-reg1} for an example result). If $f$, or $u_0$, only vanishes at $z$ (or in a set of  dimension $\leq  n-2$), then the problem becomes non-standard and is more or less untouched in the free boundary literature.

\subsection{Inverse scattering} \label{subseq_scattering}

Finally, we discuss the case of inverse scattering problems where the bounded domain $\Omega$ is replaced by $\mR^n$. In this subsection we allow functions to be complex valued.

Let $\lambda > 0$ be a fixed frequency, and let $u_0$ be a solution of $(\Delta + \lambda^2) u_0 = 0$ in $\mR^n$. We consider $u_0$ as an incident wave that is used to probe a medium whose scattering properties are described by a compactly supported potential $q \in L^{\infty}(\mR^n)$. The incident wave $u_0$ induces a total wave $u_q = u_0 + v$ that solves 
\[
(\Delta + \lambda^2 + q)u_q = 0 \text{ in $\mR^n$}.
\]
The solution is unique if we require that the scattered wave $v$ is \emph{outgoing} in the sense that 
\[
v = (-\Delta - (\lambda+i0)^2 - q)^{-1}(-q u_0)
\]
where $(-\Delta - (\lambda+i0)^2 - q)^{-1}$ is the outgoing resolvent. Since $q u_0$ is compactly supported, $v$ has the asymptotics 
\[
v(r\theta) = e^{i\lambda r} r^{-\frac{n-1}{2}} u_q^{\infty}(\theta) + o(r^{-\frac{n-1}{2}}) \qquad \text{as $r \to \infty$}
\]
where $\theta \in S^{n-1}$. The function $u_q^{\infty}$ on $S^{n-1}$ is called the \emph{far field pattern} corresponding to incident wave $u_0$, and it can be measured from the knowledge of $u_q$ as $\abs{x} \to \infty$. We refer to \cites{ColtonKress, Yafaev} for these basic facts.

A commonly used class of incident waves is given by the Herglotz waves, which are solutions of $(\Delta + \lambda^2) u_0 = 0$ having the form 
\[
u_0(x) = \int_{S^{n-1}} e^{i\lambda x \cdot \omega} f(\omega) \,d\omega, \qquad f \in L^2(S^{n-1}).
\]
A scattering analogue of the Dirichlet-to-Neumann map is given by the far field operator 
\[
A_q(\lambda): L^2(S^{n-1}) \to L^2(S^{n-1}), \ \ A_q(\lambda)f = u_q^{\infty}.
\]
A standard fixed frequency inverse problem is to determine $q$ from the knowledge of $A_q(\lambda)$, which corresponds to infinitely many measurements. However, we wish consider the \emph{single measurement problem} in (fixed frequency) inverse scattering: determine some properties of $q$ from knowledge of the far field pattern $u_q^{\infty}$ corresponding to a fixed incident wave $u_0$. If $u_q^{\infty} \equiv 0$ we say that the incident wave $u_0$ does not scatter. Again, in order to obtain a formally well determined problem we consider the case of \emph{penetrable obstacles}, so that 
\[
q = h \chi_D
\]
where $D \subset \mR^n$ is a bounded open set (the obstacle) and $h$ is a contrast for $D$.

In the imaging community it has been understood for a long time that if $\p D$ has corner singularities, one often has strong scattering effects. A rigorous analysis of this phenomenon was initiated in the important work \cite{BlastenPaivarintaSylvester} which showed that if part of $\p D$ is part of a cube, then every incident wave scatters nontrivially for every frequency $\lambda > 0$. In two dimensions this was extended to sectors with angle $< 90^{\circ}$ and single measurement results in \cites{PaivarintaSaloVesalainen, HuSaloVesalainen}. The analysis was based on studying Laplace transforms of characteristic functions of cones via complex geometrical optics solutions as in the Calder\'on problem. There are several related results including quantitative bounds even when corners are replaced by high curvature points, see  \cites{Blasten2018, BlastenLiu1, BlastenLiu2, BlastenLiu3, BlastenVesalainen} and the survey \cite{Liu_survey} (which also discusses results for electromagnetic and elastic scattering). Another important approach to non-scattering problems, introduced in \cite{ElschnerHu2} (see \cites{ElschnerHu1, LiHuYang} for related work), is based on the theory of boundary value problems in corner domains and can be used to produce similar results even for curvilinear polyhedra or when $h$ vanishes to  finite order at $\p D$. We mention that these results related to corner singularities are most complete for $n=2$, and even when $n=3$ they become more limited and mostly apply to edge or circular cone singularities. Finally, the results in \cite{CakoniVogelius}, already discussed before, show regularity of the free boundary if the obstacle is a Lipschitz domain and the incident wave is nonvanishing on its boundary.

In \cite{BlastenPaivarintaSylvester} a frequency $\lambda > 0$ was called a \emph{non-scattering wavenumber} if there is some incident wave $u_0$ that does not scatter. The results mentioned above show that if $\p D$ has corner singularities, then there are no non-scattering wavenumbers (i.e.\ every incident wave scatters nontrivially independent of the frequency). This notion is connected with \emph{interior transmission eigenvalues} (see \cites{CakoniHaddar, CakoniColtonHaddar}) in the sense that a non-scattering wavenumber is also an interior transmission eigenvalue. The converse is not true: for a given potential there are typically infinitely many interior transmission eigenvalues whereas the set of non-scattering frequencies may be empty.

The free boundary approach described above extends directly to the scattering case. If $u_0$ is an incident wave solving $(\Delta + \lambda^2) u_0 = 0$ in $\mR^n$, let $u_q$ be the outgoing solution of $(\Delta + \lambda^2 + q) u_q = 0$ in $\mR^n$ defined above where $q = h \chi_D$ and $D$ is a bounded solid domain. Suppose that $u_0$ does not scatter, i.e.\ $u_q^{\infty} \equiv 0$. Then the Rellich uniqueness theorem and unique continuation imply that $u_q = u_0$ in $\mR^n \setminus \ol{D}$. We may thus take $\Omega$ to be some large ball containing $\ol{D}$, and we are back in the situation of \eqref{eq1}--\eqref{eq2}. Moreover, if $h$ is assumed to be real valued and if for some $z \in \p D$ one has $h(z) u_0(z) \neq 0$, then $\mathrm{Re}(u_0)(z) \neq 0$ or $\mathrm{Im}(u_0)(z) \neq 0$. Thus by taking real or imaginary parts of the solutions one can reduce to a situation where the functions involved are real valued.

If the obstacle has real-analytic boundary, combining Theorem \ref{thm_fb_example_analytic} (with $\Omega = \mR^n$) and Theorem \ref{thm_helmholtz_nonvanishing} leads to an analogue of Theorem \ref{thm_fb_example_analytic_intro} in the scattering setting. This provides examples of real valued contrasts and incident waves that do not scatter.

\subsection*{Structure of the article}

In Section \ref{sec_fb_examples} we will prove Theorems \ref{thm_fb_example_analytic_intro} and \ref{thm_fb_example_quadrature_intro} by using a simple extension argument, the Cauchy-Kowalevski theorem and the defining property of quadrature domains. In Section \ref{sec_helmholtz} we discuss Helmholtz solutions and prove Theorem \ref{thm_helmholtz_nonvanishing}, which follows by combining a result in $D$ with a Runge approximation argument. In Section \ref{sec_fb} we discuss how Theorems \ref{Thm:FB-reg-dichotomy}--\ref{Thm:FB-reg1} follow from arguments in the theory of free boundaries. 

\subsection*{Acknowledgements}

M.S.\ was supported by the Academy of Finland (Finnish Centre of Excellence in Inverse Modelling and Imaging, grant numbers 312121 and 309963) and by the European Research Council under Horizon 2020 (ERC CoG 770924). 
H. Sh. was supported in part by Swedish Research Coucil.

\section{Examples of free boundaries} \label{sec_fb_examples}

In this section we show that any real-analytic boundary, or the boundary of any quadrature domain when $\lambda = 0$, can be realized as a free boundary. The following results are more precise versions of Theorems \ref{thm_fb_example_analytic_intro} and \ref{thm_fb_example_quadrature_intro}, since they also give information on the kinds of incident waves and contrasts for which one has no scattering. Note that Theorem \ref{thm_fb_example_analytic_intro} follows by combining Theorems \ref{thm_fb_example_analytic} and \ref{thm_helmholtz_nonvanishing}, and Theorem \ref{thm_fb_example_quadrature_intro} follows from Theorem \ref{thm_fb_example_quadrature} by taking $u_0 \equiv 1$.

\begin{Theorem} \label{thm_fb_example_analytic}
Let $\lambda \geq 0$, let $D \subset \mR^n$ be a bounded open set with real-analytic boundary, let $\Omega \subset \mR^n$ be an open set with $\ol{D} \subset \Omega$, and let $u_0$ be any solution of $(\Delta + \lambda^2) u_0 = 0$ in $\Omega$ that is positive on $\p D$. Let also $h_0$ be a real-analytic function near $\p D$. Then there is $h \in C^{\infty}(\ol{\Omega})$ with $h=h_0$ near $\p D$ so that with the choice $q = h \chi_D$, the equation 
\[
(\Delta + \lambda^2 + q) u_q = 0 \text{ in $\Omega$}
\]
has a solution $u_q \in H^2(\Omega)$ satisfying $u_q = u_0$ in $\Omega \setminus \ol{D}$.
\end{Theorem}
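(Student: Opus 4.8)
The plan is to force $u_q$ to coincide with $u_0$ on $\Omega\setminus\ol D$, to build $u_q$ inside $D$ by solving an analytic Cauchy problem with data inherited from $u_0$ on $\p D$, and then to recover the contrast $h$ inside $D$ by dividing the equation by $u_q$ --- which is exactly where the hypothesis that $u_0$ be positive on $\p D$ enters. First I would record that $u_0$, being a distributional solution of the constant-coefficient elliptic equation $(\Delta+\lambda^2)u_0=0$, is automatically real-analytic in $\Omega$, so its Cauchy data $u_0|_{\p D}$ and $\p_\nu u_0|_{\p D}$ on the real-analytic hypersurface $\p D$ are real-analytic.

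Next I would carry out the Cauchy--Kowalevski step. Near each point of $\p D$ the surface $\p D$ is real-analytic and non-characteristic for $\Delta+\lambda^2+h_0$, whose coefficients are real-analytic near $\p D$. Hence the Cauchy--Kowalevski theorem produces a local real-analytic solution of $(\Delta+\lambda^2+h_0)w=0$ with $w=u_0$ and $\p_\nu w=\p_\nu u_0$ on $\p D$; by the uniqueness part of the Cauchy--Kowalevski theorem these local solutions agree on overlaps, so (using compactness of $\p D$) they patch to a single real-analytic function $w$ defined on an open neighborhood $U$ of $\p D$ in $\mR^n$. Since $w=u_0>0$ on $\p D$, after shrinking $U$ I may assume that $w>0$ on $U$ and that $h_0$ is real-analytic on $U$.

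The last step is the gluing and the recovery of $h$. I would extend $w$ to some $\tilde w\in C^\infty(\ol D)$ with $\tilde w>0$ on $\ol D$ and $\tilde w=w$ on a neighborhood of $\p D$ --- for instance $\tilde w:=1+\chi(w-1)$ for a cutoff $\chi\in C^\infty_c(U)$ with $\chi\equiv1$ near $\p D$, where $\chi(w-1)$ is extended by zero --- and then set
\[
u_q := \begin{cases} \tilde w & \text{on }\ol D,\\ u_0 & \text{on }\Omega\setminus D,\end{cases}
\qquad
h := \begin{cases} -\dfrac{\Delta\tilde w+\lambda^2\tilde w}{\tilde w} & \text{on }D,\\ h_0 & \text{on }U.\end{cases}
\]
The two formulas for $u_q$ are consistent on $\p D$ because $w$ carries the right Cauchy data, and the two formulas for $h$ are consistent on $U\cap D$ because there $\tilde w=w$ solves $(\Delta+\lambda^2+h_0)w=0$ with $w>0$; since $\tilde w>0$ on $\ol D$, this defines $h$ smoothly on the open set $D\cup U\supset\ol D$ with $h=h_0$ near $\p D$. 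Multiplying by a cutoff that is $1$ on a neighborhood of $\ol D$ and supported in $D\cup U$, and restricting to $\ol\Omega$, gives $h\in C^\infty(\ol\Omega)$ still equal to $h_0$ near $\p D$ and unchanged on $D$ (so $q=h\chi_D$ is unaffected). Since $u_q$ has matching Cauchy data across the smooth hypersurface $\p D$ and is smooth on each side, $u_q\in W^{2,\infty}_{\mathrm{loc}}(\Omega)\subset H^2_{\mathrm{loc}}(\Omega)$ and its distributional Laplacian carries no single-layer term on $\p D$; hence $(\Delta+\lambda^2+h\chi_D)u_q$ equals $(\Delta+\lambda^2+h)\tilde w=0$ a.e.\ in $D$ and $(\Delta+\lambda^2)u_0=0$ a.e.\ in $\Omega\setminus\ol D$, so it vanishes in $\Omega$. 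Finally $u_q-u_0$ is supported in the compact set $\ol D\subset\Omega$, whence $u_q-u_0\in H^2_0(\Omega)$ and $u_q\in H^2(\Omega)$.

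I do not expect a genuinely hard step here: the only point that must be handled with care is that solvability of the (otherwise ill-posed) Cauchy problem truly requires the real-analyticity of $\p D$ and of $h_0$ --- the analyticity of $u_0$ coming for free --- while the positivity of $u_0$ on $\p D$, equivalently the non-vanishing of $w$ near $\p D$, is precisely what legitimizes the division defining $h$.
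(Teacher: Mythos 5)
Your proposal is correct and follows essentially the same route as the paper: solve the Cauchy problem for $(\Delta+\lambda^2+h_0)w=0$ with data $(u_0,\p_\nu u_0)$ on the real-analytic hypersurface $\p D$ via Cauchy--Kowalevski, patch the local solutions by uniqueness, and then your gluing $\tilde w=1+\chi(w-1)$ together with $h=-\frac{(\Delta+\lambda^2)\tilde w}{\tilde w}$ on $D$ is precisely the paper's Lemma \ref{lemma_local_global} (the paper patches via unique continuation rather than CK uniqueness, an immaterial difference). Your verification that the matching Cauchy data yields $u_q\in W^{2,\infty}_{\mathrm{loc}}\subset H^2_{\mathrm{loc}}$ and that the division by $\tilde w$ is legitimized by $u_0>0$ on $\p D$ matches the paper's argument.
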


\begin{Theorem} \label{thm_fb_example_quadrature}
Let $D \subset \mR^n$ be a bounded quadrature domain, let $\Omega \subset \mR^n$ be an open set with $\ol{D} \subset \Omega$, and let $u_0$ be any solution of $\Delta u_0 = 0$ in $\Omega$ that is positive on $\p D$. Then there is $h \in L^{\infty}(\Omega)$ with $\abs{h} \geq c > 0$ near $\p D$ so that with the choice $q = h \chi_D$, the equation 
\[
(\Delta + q) u_q = 0 \text{ in $\Omega$}
\]
has a solution $u_q \in H^2(\Omega)$ satisfying $u_q = u_0$ in $\Omega \setminus \ol{D}$.
\end{Theorem}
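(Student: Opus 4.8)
The plan is to construct $u_q$ and the contrast $h$ by hand. Set $u := u_q - u_0$; I want to produce $u \in H^2(\Omega)$ with $\operatorname{supp}(u) \subset \ol D$ — so that automatically $u \in H^2_0(\Omega)$, the function $u_q := u_0 + u$ agrees with $u_0$ on $\Omega \setminus \ol D$, and (since $\Delta u_0 = 0$) one has $\Delta u_q = \Delta u$ — with the additional properties that $\Delta u \in L^\infty(\Omega)$, that $u + u_0 > 0$ on $\ol D$, and that $\abs{\Delta u} \ge 1$ in $D$ in a neighbourhood of $\p D$. Granting this, put $h := -\Delta u/(u_0 + u)$ on $D$ and $h := 1$ on $\Omega \setminus \ol D$. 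Then $h \in L^\infty(\Omega)$, because $u_0 + u$ is bounded below by a positive constant on the compact set $\ol D$; and $\abs{h} \ge c > 0$ near $\p D$ (inside $D$ because $\abs{\Delta u} \ge 1$ there while $u_0 + u$ is bounded above, outside because $h = 1$). With $q := h\chi_D$ one gets $(\Delta + q)u_q = \Delta u + h\chi_D(u_0+u) = \Delta u - \chi_D \Delta u = 0$ a.e.\ in $\Omega$, while $u_q = u_0 + u \in H^2(\Omega)$ and $u_q = u_0$ on $\Omega \setminus \ol D$. So everything reduces to constructing $u$.

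The construction uses the defining property of the quadrature domain. Let $\mu$ be the quadrature distribution, $\operatorname{supp}(\mu) \subset D$ compact, let $E$ be the fundamental solution of $\Delta$, and set $u_\ast := E \ast \chi_D - E \ast \mu$, so that $\Delta u_\ast = \chi_D - \mu$ in $\mR^n$. For $x \notin \ol D$ the function $y \mapsto E(x-y)$ is harmonic in $D$ and continuous on $\ol D$, hence lies in $L^1(D)$, so \eqref{quadrature_condition} gives $(E \ast \chi_D)(x) = \int_D E(x-y)\,dy = \int E(x-y)\,d\mu(y) = (E \ast \mu)(x)$; thus $u_\ast$ vanishes on $\mR^n \setminus \ol D$. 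Fix a neighbourhood $N$ of $\p D$ with $\ol N \cap \operatorname{supp}(\mu) = \emptyset$. On $N$ we have $\Delta u_\ast = \chi_D \in L^\infty$ and $E \ast \mu$ is smooth, so $u_\ast \in C^{1,\alpha}(N)$; since $u_\ast \equiv 0$ on the open set $N \setminus \ol D$, this gives $u_\ast = \abs{\nabla u_\ast} = 0$ on $\p D$, and $\Delta u_\ast = \chi_D = 1$ in $N \cap D$.

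The only defect of $u_\ast$ is that it need not lie in $H^2$ — $\mu$ may be genuinely singular; for a ball $D = B(a,r)$ one has $\mu = \abs{B_r} \delta_a$ and $u_\ast$ has a pole at $a$ — and this is repaired by cutting off in the interior. Shrinking $N$, we may also assume $u_0 > \tfrac12 \min_{\p D} u_0$ and $\abs{u_\ast} < \tfrac14 \min_{\p D} u_0$ throughout $N$ (possible since $u_0 > 0$ on the compact set $\p D$ and $u_\ast$ is continuous and vanishes on $\p D$). Choose $\psi \in C_c^\infty(D)$ with $0 \le \psi \le 1$, with $\psi \equiv 1$ on $D \setminus N$ and $\psi \equiv 0$ on a collar of $\p D$ inside $N$, and put
\[
u := (1-\psi)\,u_\ast + M\psi , \qquad M := 1 + \max_{\ol D}\abs{u_0}.
\]
Then $\operatorname{supp}(u) \subset \ol D$ and $u \equiv 0$ off $\ol D$. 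Since $1-\psi$ vanishes near $\operatorname{supp}(\mu)$, the function $(1-\psi)u_\ast$ is $C^{1,\alpha}$ up to $\p D$ with vanishing Cauchy data there and smooth elsewhere, so $u \in H^2(\Omega)$; moreover $\Delta u \in L^\infty(\Omega)$, since it equals $\chi_D$ near $\p D$, a fixed smooth function on the compact transition region $\{0<\psi<1\} \subset N \cap D$, and $0$ on $\{\psi = 1\}$; and $\Delta u = 1$ in $D$ just inside $\p D$. Finally $u + u_0 > 0$ on $\ol D$: where $\psi = 0$ it equals $u_\ast + u_0 > -\tfrac14\min_{\p D}u_0 + \tfrac12\min_{\p D}u_0 > 0$; on $\{0<\psi<1\}$, the value $u = (1-\psi)u_\ast + M\psi \ge -\tfrac14\min_{\p D}u_0$ is a convex combination of $u_\ast > -\tfrac14\min_{\p D}u_0$ and $M \ge 0$, and this region sits in $N$ where $u_0 > \tfrac12\min_{\p D}u_0$; and on $\{\psi = 1\}$ one has $u + u_0 = M + u_0 \ge 1 > 0$. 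This supplies the $u$ required in the first paragraph.

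There is no serious obstacle here — as the authors note, the result is not difficult — and the whole conceptual point is the single line that $E \ast \chi_D - E \ast \mu$ vanishes off $\ol D$ precisely by the quadrature identity. The only thing needing care is the interior cutoff: one must destroy the (possibly singular) support of $\mu$ while simultaneously preserving the support condition $\operatorname{supp}(u) \subset \ol D$, the regularity $u \in H^2$ with $\Delta u \in L^\infty$, the normalization $\Delta u = 1$ just inside $\p D$, and the positivity $u + u_0 > 0$ on $\ol D$ that makes $h = -\Delta u/(u_0+u)$ a bona fide $L^\infty$ contrast — and this is exactly where one exploits the freedom to choose the contrast arbitrarily away from $\p D$.
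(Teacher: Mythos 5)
Your proposal is correct and follows essentially the same route as the paper: the heart of both arguments is that $u_\ast = G \ast (\chi_D - \mu)$ vanishes off $\ol D$ by the quadrature identity, after which one cuts off in the interior (your interpolation with the constant $M$ playing the role of the paper's Lemma \ref{lemma_local_global}, which interpolates with the constant $1$) and defines the contrast by dividing $-\Delta u$ by the positive total wave near $\p D$. The only cosmetic differences are that you inline the cutoff lemma rather than invoking it, and you use standard interior $W^{2,p}$ elliptic regularity for $u_\ast$ near $\p D$ (which suffices for $u_q \in H^2$) instead of citing the $C^{1,1}$ regularity for the no-sign obstacle problem.
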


The proofs of Theorems \ref{thm_fb_example_analytic} and \ref{thm_fb_example_quadrature} involve the following simple result. It begins with a solution of $(\Delta + \lambda^2) u_0 = 0$ in $\Omega$ that is positive on $\p D$ and with a local solution $v_0$, for some potential $h_0$, that extends $u_0$ slightly inside $D$. The result gives a solution $v$ that extends $u_0$ all the way into $D$ and corresponds to some potential $h \chi_D$ where $h$ extends $h_0$ into $D$. The point is that one first chooses a suitable extension $v$ of $v_0$, and then constructs the potential $h$ depending on $v$.

\begin{Lemma} \label{lemma_local_global}
Let $\Omega \subset \mR^n$ be open, let $D \subset \mR^n$ be a bounded open set with $\ol{D} \subset \Omega$, let $\lambda \geq 0$, and assume that $(\Delta + \lambda^2) u_0 = 0$ in $\Omega$ with $u_0$ positive on $\p D$. Suppose that $U$ is a neighborhood of $\p D$ in $\Omega$ and that $h_0 \in L^{\infty}(U)$ and $v_0 \in C^{1,1}(U)$ satisfy 
\[
(\Delta + \lambda^2 + h_0) v_0 = 0 \text{ in $D \cap U$}, \qquad v_0|_{U \setminus \ol{D}} = u_0|_{U \setminus \ol{D}}.
\]
Then there are $h \in L^{\infty}(\Omega)$ and $v \in C^{1,1}(\Omega)$, with $h = h_0$ and $v = v_0$ near $\p D$, so that 
\[
(\Delta + \lambda^2 + h \chi_D) v = 0 \text{ in $\Omega$}, \qquad v|_{\Omega \setminus \ol{D}} = u_0|_{\Omega \setminus \ol{D}}.
\]
If additionally $h_0 \in C^{\infty}(U)$ and $v_0|_{\ol{D} \cap U} \in C^{\infty}(\ol{D} \cap U)$, then $h \in C^{\infty}(\ol{\Omega})$.
\end{Lemma}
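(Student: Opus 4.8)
The plan is to construct the global extension $v$ first and only afterwards \emph{define} the potential inside $D$ by $h:=-(\Delta+\lambda^2)v/v$, exploiting that $v$ is forced to equal $v_0$ only in a thin collar of $\partial D$ and is otherwise free inside $D$, so that $v$ can be kept strictly positive on all of $\overline{D}$.

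\textbf{Reduction to $v_0>0$ near $\partial D$.} Since $u_0$ is continuous and positive on the compact set $\partial D$, it is positive on a neighborhood of $\partial D$. Shrinking $U$ to this neighborhood and using $v_0=u_0$ on $U\setminus\overline{D}$ (so that $v_0=u_0>0$ on $\partial D$ by continuity, every boundary point of the solid‑type domains we consider being a limit of points of $\mR^n\setminus\overline{D}$) together with continuity of $v_0$, we may assume $v_0\ge c_0>0$ on $U$.

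\textbf{The extension.} Fix nested open neighborhoods $\partial D\subset N_1\subset\overline{N_1}\subset N_2\subset\overline{N_2}\subset U$ of $\partial D$, a cutoff $\eta\in C^\infty(\mR^n)$ with $\eta\equiv1$ on $N_1$, $\eta\equiv0$ off $N_2$, $0\le\eta\le1$, and any $w\in C^\infty(\mR^n)$ with $w>0$ (e.g.\ $w\equiv1$). Set
\[
v:=\begin{cases} u_0 & \text{in }\Omega\setminus\overline{D},\\[2pt] \eta v_0+(1-\eta)w & \text{in }\overline{D},\end{cases}
\]
where $\eta v_0$ is read as $0$ where $\eta$ vanishes; this is legitimate and $C^{1,1}$ on $\overline D$ because $\supp\eta\subset\overline{N_2}\subset U$, so $\eta v_0$ is smoothly cut off before $v_0$ ceases to be defined. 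On $N_1\cap\Omega$ one has $v=v_0$ on both sides of $\partial D$ (inside, $\eta\equiv1$; outside, $v_0=u_0$), hence $v=v_0\in C^{1,1}$ near $\partial D$; on $\Omega\setminus\overline D$, $v=u_0$ is real‑analytic. Thus $v\in C^{1,1}(\Omega)$, with $v=v_0$ near $\partial D$ and $v=u_0$ on $\Omega\setminus\overline D$. Moreover $v>0$ on $\overline D$: on $\overline D\cap N_2$ it is a convex combination of $v_0\ge c_0$ and $w>0$, and on $\overline D\setminus N_2$ it equals $w>0$; since $\overline D$ is compact, $v\ge c>0$ there.

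\textbf{The potential.} Define $h:=h_0$ on $N_1$, $h:=-(\Delta+\lambda^2)v/v$ on $\overline D$, and $h:=0$ elsewhere in $\Omega$. These are consistent, since on $D\cap N_1$ we have $v=v_0$ and $(\Delta+\lambda^2)v_0=-h_0v_0$, so the quotient equals $h_0$. As $v\in C^{1,1}$, $(\Delta+\lambda^2)v$ is bounded on $\overline D$, and $v\ge c>0$ there, so $h\in L^\infty(\Omega)$ (it equals $h_0\in L^\infty(U)$ on $N_1$), and $h=h_0$ near $\partial D$. The identity $(\Delta+\lambda^2)v+hv=0$ holds a.e.\ in $D$, while $(\Delta+\lambda^2)v=(\Delta+\lambda^2)u_0=0$ in $\Omega\setminus\overline D$; since $v\in C^{1,1}$ these a.e.\ identities are the distributional ones, giving $(\Delta+\lambda^2+h\chi_D)v=0$ in $\Omega$ together with $v|_{\Omega\setminus\overline D}=u_0$ and $v=v_0$ near $\partial D$. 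If in addition $h_0\in C^\infty(U)$ and $v_0|_{\overline D\cap U}\in C^\infty(\overline D\cap U)$, then $v=\eta v_0+(1-\eta)w\in C^\infty(\overline D)$ with $v\ge c>0$, so $h\in C^\infty(\overline D)$; since it coincides near $\partial D$ with $h_0\in C^\infty(U)$, it is the restriction of a function smooth on an open neighborhood of $\overline D$, and multiplying by a cutoff equal to $1$ on a neighborhood of $\overline D$ and supported in that neighborhood (which changes neither $h|_D$ nor $h$ near $\partial D$) yields $h\in C^\infty(\overline\Omega)$.

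\textbf{Main obstacle.} The only non‑formal points are patching the three regimes $v=u_0$, $v=v_0$, $v=w$ into a genuinely $C^{1,1}$ function on $\Omega$ — which is why $\eta$ is taken identically $1$ on a \emph{neighborhood} of $\partial D$ rather than merely on $\partial D$ — and the elementary observation that $u_0>0$ on $\partial D$ forces $v_0>0$ in a neighborhood of $\partial D$, which is exactly what makes dividing by $v$ legitimate; everything else is bookkeeping.
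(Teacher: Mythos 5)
Your proof is correct and follows essentially the same route as the paper's: you interpolate $v=\eta v_0+(1-\eta)w$ (the paper takes $w\equiv 1$) using that $v_0>0$ near $\p D$, and then define $h=-(\Delta+\lambda^2)v/v$ inside $D$, patching with $h_0$ near $\p D$ and handling the smooth case by the same gluing. The extra bookkeeping you add (consistency of the two formulas for $h$ on $D\cap N_1$, the cutoff for smoothness up to $\ol{\Omega}$) is at least as careful as the paper's argument, so nothing further is needed.
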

\begin{proof}
Note that $v_0$ is positive in some neighborhood $U_1 \subset U$ of $\p D$, since $v_0 \in C^{1,1}(U)$ and $v_0|_{U \setminus \ol{D}} = u_0|_{U \setminus \ol{D}}$ and $u_0$ is positive on $\p D$. Let $\psi \in C^{\infty}_c(U_1)$ satisfy $0 \leq \psi \leq 1$ and $\psi = 1$ near $\p D$, and define 
\[
v = \left\{ \begin{array}{cl} v_0 \psi + (1-\psi) &\text{ in $\ol{D}$}, \\[5pt] 
 u_0 &\text{ in $\Omega \setminus \ol{D}$}. \end{array} \right.
\]
Then $v \in C^{1,1}(\Omega)$ is positive near $\ol{D}$ and satisfies $v = v_0$ near $\p D$. One can now define a function $h \in L^{\infty}(\Omega)$ by 
\[
h = \left\{ \begin{array}{cl} -\frac{(\Delta + \lambda^2)v}{v} &\text{ in $\ol{D}$}, \\[5pt] 
 \psi h_0 &\text{ in $\Omega \setminus \ol{D}$}. \end{array} \right.
\]
The functions $h$ and $v$ will have the required properties.

If additionally $h_0 \in C^{\infty}(U)$ and  $v_0|_{\ol{D} \cap U} \in C^{\infty}(\ol{D} \cap U)$, it follows that $v|_{\ol{D}} \in C^{\infty}(\ol{D})$ and $h|_{\ol{D}} \in C^{\infty}(\ol{D})$. Since $h = h_0$ near $\p D$ we have $h \in C^{\infty}(\ol{\Omega})$.
\end{proof}

By Lemma \ref{lemma_local_global}, the proofs of Theorems \ref{thm_fb_example_analytic} and \ref{thm_fb_example_quadrature} are reduced to finding a local solution $v_0$ that extends $u_0$ a little bit inside $D$. In the real-analytic case this can be done by solving a Cauchy problem using the Cauchy-Kowalevski theorem.

\begin{proof}[Proof of Theorem \ref{thm_fb_example_analytic}]
Note that $u_0$ is real-analytic in $\Omega$. For any $x \in \p D$, we may use the Cauchy-Kowalevski theorem to find a real-analytic solution of 
\[
(\Delta + \lambda^2 + h_0) v_x = 0 \text{ in $U_x$}, \qquad v_x|_{U_x \cap \p D} = u_0|_{U_x \cap \p D}, \ \ \p_{\nu} v_x|_{U_x \cap \p D} = \p_{\nu} u_0|_{U_x \cap \p D}
\]
where $U_x$ is an open set of the form $\{ z + t\nu(z) \,:\, z \in V_x, \ \abs{t} < \eps_x \}$ with $V_x$ a neighborhood of $x$ in $\p D$ and $\eps_x > 0$. Any two solutions $v_x$ and $v_y$ agree on their overlap $U_x \cap U_y$ by the unique continuation principle. Thus, for some neighborhood $U$ of $\p D$ in $\Omega$, there is a real-analytic function $v_0$ in $U$ so that 
\[
(\Delta + \lambda^2 + h_0) v_0 = 0 \text{ in $U$}, \qquad v_0|_{\p D} = u_0|_{\p D}, \ \ \p_{\nu} v_0|_{\p D} = \p_{\nu} u_0|_{\p D}.
\]
We may redefine $v_0 = u_0$ in $U \setminus \ol{D}$, so that $v_0 \in C^{1,1}(U)$ will satisfy 
\[
(\Delta + \lambda^2 + h_0) v_0 = 0 \text{ in $D \cap U$}, \qquad v_0|_{U \setminus \ol{D}} = u_0.
\]

By Lemma \ref{lemma_local_global} there are $v \in C^{1,1}(\Omega)$ and $h \in C^{\infty}(\ol{\Omega})$ that satisfy $v=v_0$ and $h=h_0$ near $\p D$, such that 
\[
(\Delta + \lambda^2 + h \chi_D) v = 0 \text{ in $\Omega$}, \qquad v_0|_{\Omega \setminus \ol{D}} = u_0.
\]
It remains to set $u_q = v$ in $D$ and $u_q = u_0$ in $\Omega \setminus \ol{D}$. Then $u_q \in H^2(\Omega)$ has the required properties.
\end{proof}

In the case of quadrature domains we instead use \eqref{quadrature_condition} to produce the required local solution.

\begin{proof}[Proof of Theorem \ref{thm_fb_example_quadrature}]
Let $G$ be the fundamental solution for $-\Delta$ in $\mR^n$, i.e.\ $G(x) = c_2 \log \,\abs{x}$ when $n=2$ and $G(x) = c_n \abs{x}^{2-n}$ when $n \geq 3$. Let $\mu$ be the distribution with $\supp(\mu) \subset D$ appearing in the definition of the quadrature domain $D$, and define 
\[
u = G \ast (\chi_D - \mu).
\]
Since $\chi_D - \mu$ is a compactly supported distribution, $u$ is a distribution in $\mR^n$ and it satisfies 
\[
\Delta u = \chi_D - \mu \text{ in $\mR^n$}.
\]
Moreover, if $x \in \mR^n \setminus \ol{D}$, we may take $H(y) = G(x-y)$ in \eqref{quadrature_condition} to obtain that 
\[
u|_{\mR^n \setminus \ol{D}} = 0.
\]
In particular, since $\supp(\mu) \subset D$, there is a neighborhood $U$ of $\p D$ in $\mR^n$ such that 
\begin{equation} \label{deltau_chid_condition}
\Delta u = \chi_D \text{ in $U$}, \qquad u|_{U \setminus \ol{D}} = 0.
\end{equation}
Note that $u \in C^{1,1}(U)$ using the $C^{1,1}$ regularity results for the no-sign obstacle problem \cite{AnderssonLindgrenShahgholian}.

Define $v_0 = u + u_0$ in $U$. We first claim that there is $h_0$ near $\p D$ with $\abs{h_0} \geq c > 0$ near $\p D$ so that 
\[
(\Delta + h_0 \chi_D) v_0 = 0 \text{ near $\p D$}.
\]
In fact, using the equations for $u$ and $u_0$, for any $h_0$ one has 
\[
(\Delta + h_0 \chi_D) v_0 = \chi_D + h_0 \chi_D v_0 \text{ in $U$}.
\]
This quantity vanishes near $\p D$ if we set 
\[
h_0 = -\frac{1}{v_0} \text{ near $\p D$}.
\]
The denominator is nonvanishing near $\p D$ since $u$ is continuous with $u|_{\p D} = 0$, and since $u_0$ is positive on $\p D$. We have thus found the required function $h_0$ near $\p D$.

By Lemma \ref{lemma_local_global} there are $h \in L^{\infty}(\Omega)$ and $v \in C^{1,1}_{\mathrm{loc}}(\Omega)$, with $h = h_0$ and $v = v_0$ near $\p D$, so that 
\[
(\Delta + h \chi_D) v = 0 \text{ in $\Omega$}, \qquad v|_{\Omega \setminus \ol{D}} = u_0|_{\Omega \setminus \ol{D}}.
\]
Setting $u_q = v$ in $\ol{D}$ and $u_q = u_0$ in $\Omega \setminus \ol{D}$ gives the required solution.
\end{proof}

\begin{Remark}
The only property of quadrature domains needed in the proof of Theorem \ref{thm_fb_example_quadrature} was the existence of a function $u$ satisfying \eqref{deltau_chid_condition}. The conclusion of Theorem \ref{thm_fb_example_quadrature}, also with a frequency $\lambda \geq 0$,  would hold for any domain $D$ that admits a function $u$ satisfying 
\[
(\Delta + \lambda^2) u = f \chi_D \text{ in $U$}, \qquad u|_{U \setminus \ol{D}} = 0,
\]
where $f$ is nonvanishing on $\p D$. In Theorem \ref{thm_fb_example_quadrature} we produced such a function as $u = G \ast (\chi_D - \mu)$, which can be understood as continuing the potential $G \ast \chi_D$ smoothly a little bit inside $D$ as the function $G \ast \mu$.
\end{Remark}

\section{Zero sets of Helmholtz solutions} \label{sec_helmholtz}

We complement the previous results by showing that for any bounded open set $D$ there is a solution $u_0$ of $(\Delta + \lambda^2) u_0 = 0$ in $\mR^n$ which is positive on $\p D$, under some restrictions on $D$ and $\lambda$. In the case $\lambda = 0$ one can take $u_0 \equiv 1$, so we will assume $\lambda > 0$. If one allows complex valued solutions the function $u_0 = e^{i\lambda x_1}$  is a nonvanishing solution in $\mR^n$. However, real valued solutions always have zeros. This is already seen in the case $n=1$ where any solution of $(\Delta+\lambda^2) u_0 = 0$ takes the form $u_0(x) = a \sin(\lambda x) + b \cos(\lambda x)$, and such a function has a zero in any closed interval of length $\pi/\lambda$. A similar result holds for Laplace eigenfunctions in compact manifolds (see the survey \cite{LogunovMalinnikova_survey}). The following version of this result shows that any real solution of $(\Delta + \lambda^2) u_0 = 0$ in $\mR^n$ has a zero in any ball of radius $\geq c_n/\lambda$.

\begin{Lemma} \label{lemma_helmholtz_local_zeros}
Let $\lambda > 0$ and $x^0 \in \mR^n$. There is a nonvanishing real function $u \in C^{\infty}(\ol{B(x^0,r)})$ solving $(\Delta + \lambda^2) u = 0$ in $B(x^0,r)$ if and only if $r < c_n/\lambda$, where $c_n$ denotes the first positive zero of the Bessel function $J_{\frac{n-2}{2}}$.
\end{Lemma}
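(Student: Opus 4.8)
The plan is to reduce to a radial model by a symmetrization/averaging argument in one direction, and to an explicit computation in the other. For the "if" direction (existence when $r < c_n/\lambda$), I would exhibit the radial solution directly: the function
\[
u(x) = \abs{x-x^0}^{-\frac{n-2}{2}} J_{\frac{n-2}{2}}(\lambda \abs{x-x^0}),
\]
suitably interpreted at the center (it extends smoothly, being essentially $\sum c_k (\lambda^2\abs{x-x^0}^2)^k$), solves $(\Delta+\lambda^2)u = 0$ in $\mR^n$; this is the classical fact that the spherically symmetric Helmholtz solution is given by a Bessel function of order $\frac{n-2}{2}$ after the standard substitution reducing the radial part of $\Delta+\lambda^2$ to Bessel's equation. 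Since $J_{\frac{n-2}{2}}$ has no zero on $[0,c_n)$ and $u$ is continuous and equal to a positive constant times $\lambda^{(n-2)/2}/2^{(n-2)/2}\Gamma(n/2)$ at the center (or one can just note $J_\nu(t)\sim (t/2)^\nu/\Gamma(\nu+1)>0$ for small $t>0$), $u$ is nonvanishing on $\ol{B(x^0,r)}$ whenever $\lambda r < c_n$, i.e.\ $r < c_n/\lambda$. After possibly flipping sign we may take it positive.

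For the "only if" direction (no nonvanishing solution when $r \geq c_n/\lambda$), suppose $u \in C^\infty(\ol{B(x^0,r)})$ solves $(\Delta+\lambda^2)u=0$ and $u>0$ throughout. Without loss of generality $x^0 = 0$. The key step is to compare $u$ with the radial solution. Consider the spherical average $\bar u(s) = \fint_{S^{n-1}} u(s\omega)\,d\omega$ for $0\le s\le r$. A standard computation shows that $\bar u$ solves the radial Helmholtz ODE $\bar u'' + \frac{n-1}{s}\bar u' + \lambda^2 \bar u = 0$ on $(0,r)$ with $\bar u'(0)=0$ and $\bar u(0) = u(0) > 0$; by uniqueness for this singular ODE, $\bar u(s) = u(0)\cdot \phi(s)$ where $\phi(s) = c\, s^{-\frac{n-2}{2}} J_{\frac{n-2}{2}}(\lambda s)$ normalized so $\phi(0)=1$. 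But $\phi$ vanishes at $s = c_n/\lambda$, whence $\bar u(c_n/\lambda) = 0$; since $u > 0$ forces $\bar u > 0$ on all of $[0,r] \supset [0,c_n/\lambda]$, this is a contradiction, and therefore no positive (hence, after sign flip, no nonvanishing) solution exists when $r \geq c_n/\lambda$. (One should also address $r$ exactly equal to $c_n/\lambda$: the average vanishes at the endpoint $s=r$, which still contradicts strict positivity of $u$ on the closed ball; note $u\in C^\infty(\ol{B})$ makes the spherical average well defined and continuous up to $s=r$.)

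The main obstacle is verifying cleanly that the spherical average satisfies the radial Helmholtz equation with the correct behavior at the origin, and invoking uniqueness for this singular Sturm–Liouville problem to identify $\bar u$ with the Bessel profile. This is classical — one differentiates under the integral sign, uses that $\Delta$ in polar coordinates is $\partial_s^2 + \frac{n-1}{s}\partial_s + \frac{1}{s^2}\Delta_{S^{n-1}}$ and that the angular term integrates to zero over the sphere, and then uses that among solutions of $y'' + \frac{n-1}{s} y' + \lambda^2 y = 0$ only the multiples of the Bessel profile stay bounded at $s=0$ — but it should be written out with the normalization constant pinned down so that the first zero really is at $s = c_n/\lambda$. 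An equivalent and perhaps cleaner route avoiding the singular-ODE uniqueness is to integrate the identity $\int_{B(0,s)}(\Delta+\lambda^2)u = 0$ using the divergence theorem, obtaining $\frac{d}{ds}\big(s^{n-1}\bar u'(s)\big) = -\lambda^2 s^{n-1}\bar u(s)$, which is exactly the Bessel recursion and again forces $\bar u$ to be the Bessel profile; I would present whichever is shortest.
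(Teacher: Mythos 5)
Your proposal is correct, and its existence half coincides with the paper's: both reduce the radial part of $\Delta+\lambda^2$ to Bessel's equation and take the bounded radial solution $s^{-\frac{n-2}{2}}J_{\frac{n-2}{2}}(\lambda s)$, which is nonvanishing on $\ol{B(x^0,r)}$ precisely when $\lambda r<c_n$. For the converse you take a genuinely different route. The paper uses the Liouville (ground-state) transformation: if $w>0$ solved the equation on $\ol{B(x^0,r)}$ with $r\geq c_n/\lambda$, then setting $\gamma=w^2$ turns the equation into $\mathrm{div}(\gamma\nabla(\gamma^{-1/2}u))=0$, and feeding in the radial Bessel solution $v$, which is positive in $B(x^0,c_n/\lambda)$ and vanishes on its boundary, makes $\gamma^{-1/2}v$ attain an interior maximum on $\ol{B(x^0,c_n/\lambda)}$, contradicting the maximum principle for this divergence-form operator. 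You instead average the putative positive solution over spheres centered at $x^0$: the average is a bounded radial Helmholtz solution, hence (by the same classification of bounded radial solutions already used in the existence half, since the second solution is unbounded at $s=0$ for every $n\geq 2$) a positive multiple of the profile $s^{-\frac{n-2}{2}}J_{\frac{n-2}{2}}(\lambda s)$, which vanishes at $s=c_n/\lambda\leq r$, contradicting strict positivity of the spherical average; your treatment of the borderline case $r=c_n/\lambda$ (the zero then sits on $\p B(x^0,r)$, where nonvanishing is still demanded) is correct, as is the reduction from nonvanishing to positive by connectedness and a sign flip. Your argument is the more elementary one: it needs no maximum principle, only differentiation under the integral sign (or your Green's identity on the balls $B(x^0,s)$, which yields the radial ODE in self-adjoint form) together with uniqueness of bounded solutions of the singular radial ODE, though it leans on the exact rotational invariance of $\Delta+\lambda^2$ about $x^0$. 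The paper's ground-state/maximum-principle mechanism is less tied to that explicit symmetry and reflects the general principle that a nonvanishing solution excludes a Dirichlet eigenvalue in a subdomain; for this constant-coefficient lemma, however, both proofs are complete.
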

\begin{proof}
By translation invariance we may assume $x^0 = 0$, and replacing $u(x)$ by $u(\lambda x)$ we may assume $\lambda = 1$. We consider radial solutions $v = v(r)$ of $(\Delta + 1) v = 0$ in $\mR^n$. Writing the Laplacian in polar coordinates, we see that $v$ should satisfy 
\[
v''(r) + \frac{n-1}{r} v'(r) + v(r) = 0 \text{ in $\mR_+$}.
\]
The substitution $v(r) = r^{\frac{2-n}{2}} w(r)$ leads to the Bessel equation 
\[
r^2 w''(r) + r w'(r) + (r^2 - (\frac{n-2}{2})^2) w(r) = 0  \text{ in $\mR_+$}.
\]
Since $v(r)$ should be bounded near $r=0$, one must have $v(r) = r^{\frac{2-n}{2}} J_{\frac{n-2}{2}}(r)$ (up to a scalar multiple). Thus there is a positive solution in $B(0,c_n)$.

For the converse, we argue by contradiction and suppose that $w \in C^{\infty}(\ol{B_r})$ is positive and solves $(\Delta + 1) w = 0$ in $B_r$ for some $r \geq c_n$. We now use the fact that if a Schr\"odinger equation has a nonvanishing solution, then it can be reduced to a divergence form equation with no zero order term. Writing $\gamma = w^2$, we see that 
\[
(\Delta + 1) u = 0 \text{ in $B_r$} \quad \Longleftrightarrow \quad \mathrm{div}(\gamma \nabla (\gamma^{-1/2} u)) = 0 \text{ in $B_r$}.
\]
Now since $r \geq c_n$, taking $u$ to be the radial solution $v$ above shows that $\gamma^{-1/2} u$ is positive in $B(0, c_n)$ but becomes zero on $\p B(0, c_n)$. By the maximum principle for the equation $\mathrm{div}(\gamma \nabla \,\cdot\,) = 0$, the maximum of $\gamma^{-1/2} u$ in $\ol{B(0, c_n)}$ should be attained at the boundary. This is a contradiction.
\end{proof}

We now turn to the question of determining if there is a real valued solution of $(\Delta + \lambda^2) u_0 = 0$ in $\mR^n$ that is nonvanishing on $\p D$. The following remark shows that this may be false when $\lambda > 0$ is an eigenvalue of $-\Delta$ in $D$.

\begin{Remark} \label{remark_nonvanishing_false}
Let $D = B(0,c_n)$ where $c_n$ is as in Lemma \ref{lemma_helmholtz_local_zeros}, and suppose $u_0$ is real and solves $(\Delta + \lambda^2) u_0 = 0$ in $\mR^n$. Let $v \in H^1_0(D)$ be the radial solution in the proof of Lemma \ref{lemma_helmholtz_local_zeros} with $v > 0$ in $D$. Since $v|_{\p D} = 0$, one has  
\[
\int_{\p D} u_0 \p_{\nu} v \,dS = \int_D (u_0 \Delta v - (\Delta u_0) v) \,dx = 0.
\]
However, $\p_{\nu} v < 0$ on $\p D$, which implies that $u_0$ must change sign on $\p D$. The same argument works for any sufficiently regular $D$ if $\lambda$ is the first Dirichlet eigenvalue and $\p D$ is connected.
\end{Remark}

The next result shows that if $\lambda$ is not a Dirichlet eigenvalue in $D$, there is a solution $u_0$ which is positive on $\p D$.

\begin{Proposition} \label{prop_u0_nonvanishing}
Let $D \subset \mR^n$ be a bounded Lipschitz domain if $n = 2,3$ (resp.\ $C^1$ domain if $n \geq 4$) so that $\mR^n \setminus \ol{D}$ is connected. Suppose that $\lambda > 0$ is not a Dirichlet eigenvalue of $-\Delta$ in $D$. Then there is a real valued $u_0$ solving $(\Delta + \lambda^2) u_0 = 0$ in $\mR^n$ so that $u_0$ is positive on $ \p D$.
\end{Proposition}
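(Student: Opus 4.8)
The plan is to build $u_0$ in two stages: first solve a Dirichlet problem inside $D$ to obtain a Helmholtz solution equal to $1$ on $\p D$, and then replace it by an \emph{entire} solution via Runge approximation, keeping positivity on $\p D$. For the first stage (the ``result in $D$''): since $\lambda^2$ is not a Dirichlet eigenvalue of $-\Delta$ in $D$, there is a unique $w\in H^1(D)$ with $(\Delta+\lambda^2)w=0$ in $D$ and $w|_{\p D}=1$, and because $\p D$ is $C^1$ (or Lipschitz when $n=2,3$) and the boundary data is smooth, $w$ extends continuously to $\ol D$; in particular $w$ equals $1$, hence is positive, on $\p D$. The only inputs here are unique solvability of the Dirichlet problem --- which is exactly where the eigenvalue hypothesis enters --- and boundary continuity of the solution.

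The obstacle is that Runge/Lax--Malgrange approximation controls a solution only on \emph{compact subsets} of its domain of definition, whereas $\p D$ is not a compact subset of $D$. To get around this I would first enlarge the domain: for small $\delta>0$ let $D_\delta:=\{x:\operatorname{dist}(x,\ol D)<\delta\}$. For $\delta$ small enough, $\mR^n\setminus\ol{D_\delta}$ is still connected, and $\lambda^2$ is still not a Dirichlet eigenvalue of $-\Delta$ in $D_\delta$, the latter because the Dirichlet eigenvalues of $D_\delta$ converge to those of $D$ as $\delta\to0$. Let $w_\delta\in C(\ol{D_\delta})$ solve $(\Delta+\lambda^2)w_\delta=0$ in $D_\delta$ with $w_\delta|_{\p D_\delta}=1$. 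Writing $w_\delta=1+g_\delta$, where $g_\delta$ solves $(\Delta+\lambda^2)g_\delta=-\lambda^2$ in $D_\delta$ with zero boundary values, uniform-in-$\delta$ a priori estimates make $\{g_\delta\}$ precompact in $C(\ol D)$, and every subsequential limit solves the analogous problem in $D$, hence equals $w-1$ by uniqueness; thus $w_\delta\to w$ uniformly on $\ol D$. So for one fixed small $\delta$, $w_\delta$ is bounded away from $0$ on $\p D$, and $w_\delta$ is now a genuine Helmholtz solution on the open set $D_\delta\supset\ol D$. Since $\mR^n\setminus\ol{D_\delta}$ is connected, the Lax--Malgrange theorem lets me pick a (real-valued) entire solution $u_0$ of $(\Delta+\lambda^2)u_0=0$ in $\mR^n$ approximating $w_\delta$ uniformly on the compact set $\ol D$ closely enough that $u_0>0$ on $\p D$, which is the claim.

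The eigenvalue hypothesis is used twice, for $D$ and then transferred to $D_\delta$, and it cannot be dropped, by Remark \ref{remark_nonvanishing_false}. The only non-formal ingredients are the classical facts packaged into the passage to $D_\delta$: continuity of both the Dirichlet spectrum and the Dirichlet solution under the perturbation $D_\delta\searrow\ol D$, with estimates uniform in $\delta$, which is where the $C^1$ (resp.\ Lipschitz, $n=2,3$) regularity of $\p D$ is really used. An alternative that avoids $D_\delta$ is to invoke directly a Runge approximation theorem valid \emph{up to the boundary} --- entire Helmholtz solutions are dense in $C(\ol D)$ among solutions continuous on $\ol D$ --- which one proves by Hahn--Banach: a measure on $\ol D$ annihilating every entire solution has a single-layer-type potential that vanishes on the connected set $\mR^n\setminus\ol D$ by unique continuation, and one checks this forces it to annihilate $w$ as well.
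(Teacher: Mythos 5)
Your overall strategy is genuinely different from the paper's, and the central trick --- enlarging $D$ to a $\delta$-neighborhood $D_\delta$ so that the function to be approximated becomes a solution on an open set containing $\ol D$, after which the classical Lax--Malgrange theorem on the compact set $\ol D$ (whose complement is connected by hypothesis; connectedness of $\mR^n\setminus\ol{D_\delta}$ is in fact not needed for this step) upgrades to uniform approximation on $\ol D$ via interior estimates --- is sound and attractive. The paper instead stays in the fixed domain $D$: it writes $v=1+w$ with $w\in H^1_0(D)$, uses the Jerison--Kenig estimates to show $v\in W^{1,p}(D)$ for some $p>n$ (this is exactly where the Lipschitz vs.\ $C^1$ and $n=2,3$ vs.\ $n\geq 4$ split comes from), and then proves a Runge approximation theorem \emph{up to the boundary} in the $H^{1,p}(D)$ norm (Proposition \ref{prop_runge}), so that Sobolev embedding gives closeness in $C(\ol D)$. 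Note that your closing ``alternative'' (duality with measures on $\ol D$) glosses over precisely the Mergelyan-type difficulty the paper avoids by working in Sobolev norms: for a measure $\mu$ on $\ol D$ one gets a potential vanishing on $\mR^n\setminus\ol D$, but concluding that $\mu$ annihilates an interior solution that is merely continuous on $\ol D$ is the hard step, and it is not routine.

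The genuine gap in your main argument is the sentence ``uniform-in-$\delta$ a priori estimates make $\{g_\delta\}$ precompact in $C(\ol D)$.'' Precompactness in $C(\ol D)$ requires equicontinuity of $g_\delta$ up to $\p D$, uniformly in $\delta$, and this does not follow from standard estimates: $\ol D$ is at distance exactly $\delta$ from $\p D_\delta$, so interior Schauder/De Giorgi estimates blow up like negative powers of $\delta$, and there is no maximum principle for $\Delta+\lambda^2$ with which to run a barrier argument directly. What is actually needed is a boundary oscillation estimate for the domains $D_\delta$ with constants uniform in $\delta$; this hinges on a uniform capacity-density (corkscrew) condition for $\mR^n\setminus D_\delta$ at points of $\p D_\delta$, which one can verify only at scales $\gtrsim\delta$ (at smaller scales the complement of a $\delta$-neighborhood can be thin, e.g.\ near ridge points of the distance function), and one must then iterate the oscillation decay from a fixed scale down to scale $\delta$ and combine it with uniform $L^\infty$ bounds, which in turn require a quantitative version of your spectral-convergence claim (a Mosco/Keldysh-type stability statement for $H^1_0(D_\delta)\to H^1_0(D)$, valid for Lipschitz or $C^1$ domains but itself a theorem, not an observation). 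In other words, the ``classical facts'' you package into the passage to $D_\delta$ amount to a Keldysh-type stability theorem adapted to the Helmholtz operator, and as written this step is asserted rather than proved; once it is supplied, your route works, but it is at least as much work as the paper's combination of Jerison--Kenig regularity with Proposition \ref{prop_runge}. Your use of the eigenvalue hypothesis and the reference to Remark \ref{remark_nonvanishing_false} are correct.
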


We will prove the above result in two steps. First we show that there is $v \in W^{1,p}(D)$ for some $p > n$ such that $(\Delta+\lambda^2)v=0$ in $D$ and $v|_{\p D}$ is positive. Then we apply a Runge approximation property, showing that $v$ can be approximated by functions $u|_{D}$ where $u$ solves $(\Delta + \lambda^2) u = 0$ in $\mR^n$. This kind of property is classical for second order elliptic equations and it follows from the unique continuation property. See \cites{Lax, Malgrange} for the original results in bounded domains and \cite{RulandSalo} for further references.

Approximation by Helmholtz solutions in $\mR^n$ has been used in scattering theory at least with respect to $L^2$ norms, see e.g.\ \cite{Weder_completeness} and references therein. We need a corresponding approximation result in the $C(\ol{D})$ norm. This is more involved than approximation in $L^2$ (for analytic functions this would correspond to Mergelyan's theorem instead of Runge's theorem). In order to achieve this we will assume some regularity on $D$ and work with Sobolev norms instead.

Below we say that $D$ is a $C^0$ domain if it is locally the region above the graph of a continuous function, and we define $H^{1,p}(D) = \{ u|_D \,:\, u \in W^{1,p}(\mR^n) \}$ with the quotient norm $\norm{v}_{H^{1,p}(D)} = \inf \{ \norm{u}_{W^{1,p}(\mR^n)} \,:\, u|_D = v \}$. The space $H^{1,p}(D)$, defined via restriction, coincides with the standard Sobolev space $W^{1,p}(D)$ whenever $D$ is a $W^{1,p}$ extension domain. The following version of the Runge approximation property will be relevant for us. In the rest of this section we allow functions to be complex valued.

\begin{Proposition} \label{prop_runge}
Let $1 < p < \infty$, let $\lambda > 0$, and let $D \subset \mR^n$ be a bounded $C^0$ domain such that $\mR^n \setminus \ol{D}$ is connected. Given any $v \in H^{1,p}(D)$ (possibly complex valued) with $(\Delta + \lambda^2) v = 0$ in $D$, there exist $u_j$ solving $(\Delta + \lambda^2) u_j = 0$ in $\mR^n$ so that 
\[
\norm{u_j - v}_{H^{1,p}(D)} \to 0 \text{ as $j \to \infty$.}
\]
 If $v$ is real valued, then so are $u_j$.
\end{Proposition}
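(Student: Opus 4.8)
The plan is to run the classical duality argument of Lax and Malgrange \cites{Lax, Malgrange}, using the outgoing radiation condition (through Rellich's theorem) to control an auxiliary potential. By the Hahn--Banach theorem it suffices to prove: if $\mu \in (H^{1,p}(D))^*$ annihilates $u|_D$ for every solution $u$ of $(\Delta+\lambda^2)u=0$ in $\mR^n$, then $\mu(v)=0$ for every $v \in H^{1,p}(D)$ with $(\Delta+\lambda^2)v=0$ in $D$. Since $H^{1,p}(D) = W^{1,p}(\mR^n)/\{u : u|_D = 0\}$ by definition, such a $\mu$ is represented by a compactly supported distribution $g \in W^{-1,p'}(\mR^n)$ with $\supp(g) \subset \ol{D}$, acting by $\mu(v) = \br{g, \tilde v}$ for any $\tilde v \in W^{1,p}(\mR^n)$ with $\tilde v|_D = v$; the assumption on $\mu$ reads $\br{g, u} = 0$ for all global Helmholtz solutions $u$, in particular for all plane waves $y \mapsto e^{-i\lambda\omega\cdot y}$, $\omega \in S^{n-1}$.

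Next I would form the outgoing potential $w := \Phi_\lambda^+ \ast g$, where $\Phi_\lambda^+$ is the outgoing fundamental solution of $\Delta+\lambda^2$. Writing $g = g_0 - \sum_j \p_j g_j$ with $g_j \in L^{p'}(\mR^n)$ compactly supported and using interior elliptic estimates for $\Phi_\lambda^+$ gives $w \in W^{1,p'}_{\mathrm{loc}}(\mR^n)$; by construction $(\Delta+\lambda^2)w = g$, so $w$ solves the homogeneous equation and is outgoing on $\mR^n \setminus \ol{D}$. Since $g$ has compact support, $w$ has a far field pattern $w_\infty$ on $S^{n-1}$ which is proportional to the value of $g$ on the plane wave $y \mapsto e^{-i\lambda\theta\cdot y}$; as these are global Helmholtz solutions, $w_\infty \equiv 0$. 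By Rellich's uniqueness theorem $w$ vanishes outside a large ball containing $\ol{D}$, and then unique continuation --- using that $\mR^n\setminus\ol{D}$ is connected, exactly as in Section \ref{sec:fb-connection} --- yields $w \equiv 0$ on $\mR^n\setminus\ol{D}$. Hence $w$ is compactly supported with $\supp(w) \subset \ol{D}$. Moreover, since $D$ is a $C^0$ domain, $w|_D \in W^{1,p'}_0(D)$: a $W^{1,p'}(\mR^n)$ function supported in $\ol{D}$ is approximated in $W^{1,p'}$ by elements of $C^\infty_c(D)$, by translating inward along the local graph direction and mollifying.

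The conclusion is then a short computation. For $v$ as above and any extension $\tilde v$,
\[
\mu(v) = \br{g, \tilde v} = \br{(\Delta+\lambda^2)w, \tilde v} = -\int_{\mR^n} \nabla w \cdot \nabla \tilde v \,dx + \lambda^2 \int_{\mR^n} w\tilde v \,dx,
\]
using the definition of the distributional Laplacian together with density of smooth functions, which is legitimate because $w$ has compact support (pairing $W^{1,p'}$ against $W^{-1,p}$). Since $w$ and $\nabla w$ vanish a.e.\ on $\mR^n \setminus D$ (as $w\equiv 0$ on the open set $\mR^n\setminus\ol{D}$ and $\p D$ is null), the right-hand side equals $-\int_D \nabla w\cdot\nabla v\,dx + \lambda^2\int_D wv\,dx = \br{(\Delta+\lambda^2)v,\, w|_D}_{W^{-1,p}(D),\,W^{1,p'}_0(D)}$, which vanishes because $(\Delta+\lambda^2)v = 0$ in $D$ and $w|_D \in W^{1,p'}_0(D)$. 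This gives $\mu(v) = 0$, hence the density statement. Finally, if $v$ is real valued and $u_j|_D \to v$ in $H^{1,p}(D)$, then the real parts $\mathrm{Re}(u_j)$ are real solutions of $(\Delta+\lambda^2)\,\cdot\, = 0$ in $\mR^n$ with $\mathrm{Re}(u_j)|_D \to v$, which gives the last assertion.

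I expect the crux to be the middle step --- producing $w$ and showing it vanishes outside $\ol{D}$ --- since that is where the actual content sits: the outgoing radiation condition and Rellich's lemma (which need $\lambda > 0$), together with connectedness of $\mR^n \setminus \ol{D}$; one must also control the boundary behaviour of $w$ so that no surface term survives in the final integration by parts, and it is precisely there that the $C^0$ regularity of $D$ enters, to guarantee $w|_D \in W^{1,p'}_0(D)$. One could alternatively avoid Rellich's lemma: since $g$ annihilates every complex plane wave $e^{-i\lambda\zeta\cdot(\cdot)}$ with $\zeta\cdot\zeta = 1$, the entire function $\widehat{g}$ vanishes on $\{\xi\cdot\xi = \lambda^2\}$, so $\widehat{g}/(\lambda^2 - \xi\cdot\xi)$ is entire of exponential type and $w$ is compactly supported directly by the Paley--Wiener theorem, after which $w \equiv 0$ on $\mR^n\setminus\ol{D}$ follows again by unique continuation. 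The remaining functional-analytic bookkeeping --- identifying $(H^{1,p}(D))^*$ with distributions supported in $\ol{D}$, and tracking Sobolev exponents through the pairings --- is routine.
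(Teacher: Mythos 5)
Your argument is correct and follows essentially the same route as the paper: Hahn--Banach duality, representation of the annihilating functional by a compactly supported distribution $\mu \in W^{-1,p'}(\mR^n)$ supported in $\ol{D}$, the outgoing potential $w = G_{\lambda} \ast \mu$, Rellich's uniqueness theorem plus unique continuation (using connectedness of $\mR^n \setminus \ol{D}$) to force $w = 0$ outside $\ol{D}$, and the $C^0$-domain approximation of $w$ by $C^{\infty}_c(D)$ functions before the final integration by parts. The only real difference is cosmetic: you get $\hat{\mu}(\lambda\theta) = 0$ by testing directly against plane waves (legitimate, via a cutoff equal to $1$ near $\ol{D}$, since plane waves are smooth but not in $W^{1,p}(\mR^n)$), whereas the paper tests against Herglotz waves and runs a Green's identity over large spheres, thereby proving the slightly stronger statement that restrictions of Herglotz waves are dense.
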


The proof of Proposition \ref{prop_u0_nonvanishing} follows rather easily:

\begin{proof}[Proof of Proposition \ref{prop_u0_nonvanishing}]
Since $\lambda$ is not a Dirichlet eigenvalue in $D$, there is a real valued solution $v = 1 + w$ of $(\Delta + \lambda^2) v = 0$ in $D$ where $w \in H^1_0(D)$ is the unique solution of  
\[
(\Delta + \lambda^2) w = -\lambda^2 \text{ in $D$}.
\]
We claim that $v \in W^{1,p}(D)$ for some $p > n$. Note that the $W^{1,p}(D)$ and $H^{1,p}(D)$ norms are equivalent since Lipschitz domains are $W^{1,p}$ extension domains. Now, if  $v \in W^{1,p}(D)$ for some $p > n$, then by Proposition \ref{prop_runge} there are global solutions $u_j$ such that 
\[
\norm{u_j-1}_{C(\p D)} \leq \norm{u_j-v}_{C(\ol{D})} \leq C \norm{u_j-v}_{H^{1,p}(D)} \to 0
\]
where we used the Sobolev embedding. This shows the existence of a global solution that is positive on $\p D$.

To prove that $v \in W^{1,p}(D)$ for some $p > n$, we note that $w \in H^1_0(D)$ solves 
\begin{equation} \label{deltaw_eq}
\Delta w = -\lambda^2 v \text{ in $D$}.
\end{equation}
By Sobolev embedding the right hand side is in $L^{\frac{2n}{n-2}}(D)$ for $n \geq 3$, and in $L^r(D)$ for any $r < \infty$ for $n=2$. In particular the right hand side is in $W^{-1,r}(D)$ for any $r < \infty$ for $n=2,3,4$ and for $r = \frac{2n}{n-4}$ for $n \geq 5$. Since $D$ is Lipschitz, by \cite[Theorem 1.1]{JerisonKenig1995} one has $w \in W^{1,p}(D)$ for some $p > 3$ if $n=2,3$. This proves the claim for Lipschitz domains in dimensions $n=2,3$. For $C^1$ domains in dimensions $n \geq 4$, using \cite[Theorem 1.1]{JerisonKenig1995}, which holds with $p_0=1$ in $C^1$ domains, shows that $w \in W^{1,\frac{2n}{n-4}}(D)$. Returning to \eqref{deltaw_eq}, noting that the right hand side has more regularity, and iterating this argument shows that $v \in W^{1,p}(D)$ for all $p < \infty$ in the case of $C^1$ domains.
\end{proof}

To prove Proposition \ref{prop_runge}, it is convenient to introduce the operator 
\[
P(\lambda): C^{\infty}(S^{n-1}) \to C^{\infty}(\mR^n), \ \ P(\lambda)f(x) = \int_{S^{n-1}} e^{i\lambda x \cdot \omega} f(\omega) \,d\omega.
\]
Functions $u = P(\lambda) f$ are called Herglotz waves and they are particular solutions of $(\Delta + \lambda^2) u = 0$ in $\mR^n$. One can think of $f$ as a certain boundary value at infinity for $u$, and of $P(\lambda)$ as a Poisson integral that gives the solution of $(\Delta+\lambda^2) u = 0$ having boundary value $f$ at infinity.

The following proof, modelled after \cites{UhlmannVasy, PaivarintaSaloUhlmann}, will show that restrictions of Herglotz waves to $D$ are dense in the set of all Helmholtz solutions in $H^{1,p}(D)$.

\begin{proof}[Proof of Proposition \ref{prop_runge}]
By the Hahn-Banach theorem, it is enough to prove that any bounded linear functional $\ell$ on $H^{1,p}(D)$ that vanishes on $\{ P(\lambda)f|_{D} \,;\, f \in C^{\infty}(S^{n-1}) \}$ must also vanish on $\{ v \in H^{1,p}(D) \,;\, (\Delta + \lambda^2) v = 0 \text{ in $D$} \}$. We define a functional 
\[
\ell_1: W^{1,p}(\mR^n) \to \mC, \ \ \ell_1(u) = \ell(u|_D).
\]
Then $\ell_1$ is bounded on $W^{1,p}(\mR^n)$, and by duality 
\[
\ell_1(u) = (u, \mu)
\]
for some $\mu \in W^{-1,p'}(\mR^n)$, where $\frac{1}{p} + \frac{1}{p'} = 1$ and $(\,\cdot\,, \,\cdot\,)$ is the sesquilinear distributional pairing in $\mR^n$. Clearly $\mu = 0$ in $\mR^n \setminus \ol{D}$, so $\mu$ is a compactly supported distribution. Thus the condition $\ell(P(\lambda)f|_{D}) = 0$ for all $f \in C^{\infty}(S^{n-1})$ implies that 
\begin{equation} \label{orth1}
(P(\lambda)f, \mu) = 0 \text{ for all $f \in C^{\infty}(S^{n-1})$}.
\end{equation}

Let $G_{\lambda}$ be the outgoing fundamental solution of $\Delta + \lambda^2$, given by 
\[
G_{\lambda}(x) = c_{n,\lambda} \abs{x}^{-\frac{n-2}{2}} H_{\frac{n-2}{2}}^{(1)}(\lambda \abs{x})
\]
where $H_{\nu}^{(1)}$ is the Hankel function (see \cite[Section 1.2.3]{Yafaev}), and let $w$ be the distribution 
\[
w = G_{\lambda} \ast \mu.
\]
Then $w$ is a distributional solution of  
\[
(\Delta+\lambda^2) w = \mu \text{ in $\mR^n$}.
\]
By elliptic regularity $w \in W^{1,p'}_{\mathrm{loc}}(\mR^n)$ and $w$ is smooth outside $\ol{D}$, with the expression 
\begin{equation} \label{w_exterior}
w(x) = (G_{\lambda}(x-\,\cdot\,), \mu), \qquad x \in \mR^n \setminus \ol{D}. 
\end{equation}

Let $f \in C^{\infty}(S^{n-1})$ and $u = P(\lambda) f \in C^{\infty}(\mR^n)$. By \eqref{orth1} and the fact that $\mu$ has compact support, we have 
\begin{equation} \label{orth2}
0 = (u, \mu) = \lim_{r \to \infty} (u, \mu)_{B_r}
\end{equation}
where $(\,\cdot\,, \,\cdot\,)_{B_r}$ is the sesquilinear distributional pairing in $B_r$. We wish to use that $\mu = (\Delta + \lambda^2) w$. Since $w$ is not smooth in $\ol{D}$ we introduce a cutoff function $\chi \in C^{\infty}_c(\mR^n)$ with $0 \leq \chi \leq 1$ and $\chi = 1$ near $\ol{D}$. Writing $u = \chi u + (1-\chi)u$ and using that everything is smooth outside $\ol{D}$, we obtain from \eqref{orth2} that 
\begin{align*}
0 &= \lim_{r \to \infty}  \left( (\chi u, (\Delta + \lambda^2) w)_{B_r} + ((1-\chi) u, (\Delta + \lambda^2) w)_{B_r} \right] \\
 &= \lim_{r \to \infty} \left[ ((\Delta + \lambda^2) (\chi u), w)_{B_r} + ((\Delta + \lambda^2) ((1-\chi) u), w)_{B_r} + \int_{\p B_r} (u \ol{\p_{\nu} w} - (\p_{\nu} u) \ol{w}) \,dS \right].
\end{align*}
Since $(\Delta + \lambda^2) u = 0$, this reduces to 
\begin{equation} \label{orth3}
\lim_{r \to \infty} \int_{\p B_r} (u \ol{\p_{\nu} w} - (\p_{\nu} u) \ol{w}) \,dS = 0.
\end{equation}

Writing $x = r\theta$ where $r \geq 0$ and $\theta \in S^{n-1}$, the function $u = P(\lambda) f$ has the asymptotics 
\begin{align*}
u(r\theta) &= c_{n,\lambda}' r^{-\frac{n-1}{2}} \left[ e^{i\lambda r} f(\theta) + i^{n-1} e^{-i\lambda r} f(-\theta) \right] + O(r^{-\frac{n+1}{2}}), \\
\p_r u(r\theta) &= c_{n,\lambda}' r^{-\frac{n-1}{2}} i \lambda \left[ e^{i\lambda r} f(\theta) - i^{n-1} e^{-i\lambda r} f(-\theta) \right] + O(r^{-\frac{n+1}{2}}),
\end{align*}
as $r \to \infty$ (see \cite[Section 1.3]{Melrose}). We wish to study similar asymptotics for the outgoing function $w$. Using \eqref{w_exterior} and asymptotics for the Hankel function, we see that (as in \cite[Section 1.2.3]{Yafaev})
\begin{align*}
w(r\theta) &= c_{n,\lambda}'' r^{-\frac{n-1}{2}} e^{i\lambda r} \hat{\mu}(\lambda \theta) + O(r^{-\frac{n+1}{2}}), \\
\p_r w(r\theta) &= c_{n,\lambda}'' r^{-\frac{n-1}{2}} i \lambda e^{i\lambda r} \hat{\mu}(\lambda \theta) + O(r^{-\frac{n+1}{2}})
\end{align*}
where $\hat{\mu} \in C^{\infty}(\mR^n)$ is the Fourier transform of the compactly supported distribution $\mu$. Above $c_{n,\lambda}$, $c_{n,\lambda}'$ and $c_{n,\lambda}''$ are nonzero constants.

Inserting the asymptotics for $u$ and $w$ into \eqref{orth3} and noting that the terms containing $f(-\theta)$ cancel yields that 
\[
\int_{S^{n-1}} f(\theta) \ol{\hat{\mu}(\lambda \theta)} \,d\theta = 0.
\]
Since this is true for all $f \in C^{\infty}(S^{n-1})$ we must have $\hat{\mu}(\lambda \theta) = 0$. In particular, $w$ has the asymptotics 
\[
w(r \theta) = O(r^{-\frac{n+1}{2}}), \qquad \p_r w(r\theta) = O(r^{-\frac{n+1}{2}}).
\]
Since $w$ is outgoing and satisfies $(\Delta + \lambda^2) w = 0$ in $\mR^n \setminus \ol{D}$, the Rellich uniqueness theorem (see e.g.\ \cite{Hormander1973}) implies that $w = 0$ outside a large ball. Since $\mR^n \setminus \ol{D}$ is connected, the unique continuation principle gives that $w = 0$ in $\mR^n \setminus \ol{D}$.

We have now proved that the condition \eqref{orth1} implies that 
\[
\mu = (\Delta + \lambda^2) w \text{ in $\mR^n$}
\]
for some $w \in W^{1,p'}(\mR^n)$ vanishing in $\mR^n \setminus \ol{D}$. Now let $v \in H^{1,p}(D)$ be any solution of $(\Delta+\lambda^2) v = 0$ in $D$, and let $\tilde{v} \in W^{1,p}(\mR^n)$ be an extension of $v$. Then we have 
\[
\ell(v) = \ell_1(\tilde{v}|_D) = (\tilde{v}, \mu) = (\tilde{v}, (\Delta + \lambda^2) w) = ((\Delta+\lambda^2) \tilde{v}, w).
\]
Since $D$ is a bounded $C^0$ domain and since $w \in W^{1,p'}(\mR^n)$ vanishes in $\mR^n \setminus \ol{D}$, there are $w_j \in C^{\infty}_c(D)$ with $w_j \to w$ in $W^{1,p'}(\mR^n)$ (this is proved as in \cite[Theorem 3.29]{McLean}). It follows that 
\[
\ell(v) = \lim_{j \to \infty} ((\Delta+\lambda^2) \tilde{v}, w_j) = 0
\]
since $(\Delta+\lambda^2) \tilde{v} = 0$ in $D$. This concludes the proof.
\end{proof}

\section{Free boundary methods} \label{sec_fb} 

By arguments from Section  \ref{sec:fb-connection} we know that $u = u_q - u_0$  satisfies the equation (see \eqref{fbp_eq4}--\eqref{fbp_eq5})
\begin{equation}\label{eq:fb-no-sign}
\Delta u = f(x) \chi_{\{u \neq 0\}} \qquad \hbox{ in $B(x^0, r)$},
\end{equation}
where we may assume that $f(x) > 0 $ in some neigborhood of $x^0 \in  \partial {\{u \neq 0\}} $.
The above equation has been treated extensively in the literature, and all regularity aspects of the problem are resolved and sorted out; see e.g.\  \cite{AnderssonLindgrenShahgholian} and the references therein. Theorem \ref{Thm:FB-reg} follows directly from \cite[Theorem 1.3]{AnderssonLindgrenShahgholian}, and the proof of Theorem \ref{Thm:FB-reg1} is sketched below. Theorem \ref{Thm:FB-reg-dichotomy} in the case where $h$ is Dini or Lipschitz continuous follows from these results, and the higher regularity results follow from the method of \cite{KN} (see also \cite[Section 6.4]{PetrosyanShahgholianUraltseva}).

We shall now give classical examples of singularities that can appear in the obstacle problem.

\begin{ExampleNoNumber}
 (\cite{KN}, \cite{Sc}, \cite{Sa1}, \cite{Sa2})
We recall from \cite[page 387--390]{KN} an explicit example  of cusps appearing in the free boundary.
 These cusps  are represented by the curves
$$x_2=\pm x_1^{\mu/2}, \qquad 0\leq x_1 \leq 1,$$
where  $\mu = 4k +1,$ ($k=1,2, \cdots$) gives
non-negative solutions and
 $\mu = 4k +3,$ ($k=0,1, \cdots$) gives solutions
that become negative on the negative $x_1$-axis and near the origin.
The solution is defined locally by
$$u(x)=x_2^2  - \frac{2}{1 + \mu/2} \rho^{1+ \mu/2 }
\sin(1 + \mu/2)\theta + \cdots, \qquad x \in \Omega, \
|x| < \epsilon,
$$
for $\epsilon$ small.
Here  we have used  both
 real and complex
  notation
$$x=(x_1,x_2), \qquad
z=\rho e^{i\theta}, \quad 0\leq \theta \leq 2\pi.
$$
Also the domain $\Omega$ is the image of the  set
$$\{z: \  |z| <1 , \ \hbox{Im }z >0\} $$
under the conformal mapping $f(z)=z^2 + iz^\mu$.

\end{ExampleNoNumber}

\begin{proof}[Proof of  Theorem \ref{Thm:FB-reg1}]
The proof of Theorem  \ref{Thm:FB-reg1} when $h \in C^{0,1}(B(x^0, r))$ (i.e.\ case (1)) is somehow hidden in  \cite{Caff-Sha2004} (see their proof of Main Theorem), where it is proven that the singular set  of the free boundary lies in a $C^1$-manifold. In particular this means that whenever we blow up a solution at a singular free boundary point through any sequence $u(r_j x + x^0) /r_j^2$  (here $u = u_q - u_0$ satisfies \eqref{eq:fb-no-sign}) it will converge to a fixed  polynomial $p(x)$, with the the free boundary $ \{p= \nabla p =0\}$, and regardless of the sequence $\{r_j\}$.
This in particular implies that the limiting free boundary lies in a plane (or lower dimensional plane) which after  translation and rotation  we assume it is  $\{x_n =0\}$. From here it follows that the free boundary approaches this plane tangentially, whence the statement  b) follows whenever the free boundary has a  cusp at $x^0$.

 In case $x^0$ is not a cusp  point, then by Theorem \ref{Thm:FB-reg} in a vicinity of $x^0$  the free boundary is $C^1$, 
 and $u \geq 0$, and obviously $\p_e u \geq 0$ in a smaller neighbourhood of $x^0$. By results of \cite{Allen-Sha} (see section 1.4.2) the free boundary is $C^{1,\alpha}$, and Theorem  \ref{Thm:FB-reg1}  is proved in case (1).

To prove Theorem  \ref{Thm:FB-reg1} in the case (2) we work with $v= \partial_e u$, where by the assumption 
$e = \nabla u_0 (x^0) \neq {\bf 0}$. Since $u_q = u_0$ in $D^c$, we also have $\nabla u_q (x^0) \neq {\bf 0}$. 
As before with $u = u_q - u_0$ we have $-(\Delta+\lambda^2) \partial_e u = \partial_e(hu_q)  \chi_D = (\partial_eh u_q + h \partial_eu_q )  \chi_D$ close to $x^0$.
Since $u_0= u_q=0$ on $\partial D \cap B_r (x^0)$ and $\partial_e u_q (x^0) >0 $ and $h(x^0) \neq 0 $, we have that $\partial_e u$ satisfies the hypothesis of Theorem \ref{Thm:FB-reg1} case (1), and hence the result follows.
\end{proof}

\begin{bibdiv}
\begin{biblist}

\bib{Alessandrini1999}{article}{
   author={Alessandrini, Giovanni},
   title={Generic uniqueness and size estimates in the inverse conductivity
   problem with one measurement},
   note={Boundary value problems for elliptic and parabolic operators
   (Catania, 1998)},
   journal={Matematiche (Catania)},
   volume={54},
   date={1999},
   number={suppl.},
   pages={5--14},
   issn={0373-3505},
   review={\MR{1749818}},
}

\bib{AlessandriniIsakov}{article}{
   author={Alessandrini, Giovanni},
   author={Isakov, Victor},
   title={Analyticity and uniqueness for the inverse conductivity problem},
   language={English, with English and Italian summaries},
   journal={Rend. Istit. Mat. Univ. Trieste},
   volume={28},
   date={1996},
   number={1-2},
   pages={351--369 (1997)},
   issn={0049-4704},
   review={\MR{1463923}},
}

\bib{Allen-Sha}{article}{
    AUTHOR = {Allen, Mark}
    AUTHOR = {Shahgholian, Henrik},
     TITLE = {A new boundary {H}arnack principle (equations with right hand
              side)},
   JOURNAL = {Arch. Ration. Mech. Anal.},
  FJOURNAL = {Archive for Rational Mechanics and Analysis},
    VOLUME = {234},
      YEAR = {2019},
    NUMBER = {3},
     PAGES = {1413--1444},
      ISSN = {0003-9527},
   MRCLASS = {31B05},
  MRNUMBER = {4011700},
MRREVIEWER = {Lavi Karp},
       DOI = {10.1007/s00205-019-01415-3},
       URL = {https://doi.org/10.1007/s00205-019-01415-3},
}

\bib{AnderssonLindgrenShahgholian}{article}{
   author={Andersson, John},
   author={Lindgren, Erik},
   author={Shahgholian, Henrik},
   title={Optimal regularity for the no-sign obstacle problem},
   journal={Comm. Pure Appl. Math.},
   volume={66},
   date={2013},
   number={2},
   pages={245--262},
   issn={0010-3640},
   review={\MR{2999297}},
   doi={10.1002/cpa.21434},
}

\bib{ArridgeSchotland2009}{article}{
   author={Arridge, Simon R.},
   author={Schotland, John C.},
   title={Optical tomography: forward and inverse problems},
   journal={Inverse Problems},
   volume={25},
   date={2009},
   number={12},
   pages={123010, 59},
   issn={0266-5611},
   review={\MR{3460046}},
   doi={10.1088/0266-5611/25/12/123010},
}

\bib{AthanasopoulosCaffarelliSalsa}{article}{
   author={Athanasopoulos, I.},
   author={Caffarelli, L. A.},
   author={Salsa, S.},
   title={The free boundary in an inverse conductivity problem},
   journal={J. Reine Angew. Math.},
   volume={534},
   date={2001},
   pages={1--31},
   issn={0075-4102},
   review={\MR{1831629}},
   doi={10.1515/crll.2001.033},
}

\bib{Blasten2018}{article}{
   author={Bl\aa sten, Eemeli},
   title={Nonradiating sources and transmission eigenfunctions vanish at
   corners and edges},
   journal={SIAM J. Math. Anal.},
   volume={50},
   date={2018},
   number={6},
   pages={6255--6270},
   issn={0036-1410},
   review={\MR{3885754}},
   doi={10.1137/18M1182048},
}

\bib{BlastenLiu1}{article}{
   author={Bl\aa sten, Eemeli},
   author={Liu, Hongyu},
   title={On vanishing near corners of transmission eigenfunctions},
   journal={J. Funct. Anal.},
   volume={273},
   date={2017},
   number={11},
   pages={3616--3632},
   issn={0022-1236},
   review={\MR{3706612}},
   doi={10.1016/j.jfa.2017.08.023},
}

\bib{BlastenLiu2}{article}{
      title={Scattering by curvatures, radiationless sources, transmission eigenfunctions and inverse scattering problems}, 
      author={Bl{\aa}sten, Emilia},
      author={Liu, Hongyu},
      year={2018},
      eprint={1808.01425},
      archivePrefix={arXiv},
      primaryClass={math.AP}
}

\bib{BlastenLiu3}{article}{
   author={Bl\aa sten, Emilia},
   author={Liu, Hongyu},
   title={Recovering piecewise constant refractive indices by a single
   far-field pattern},
   journal={Inverse Problems},
   volume={36},
   date={2020},
   number={8},
   pages={085005, 16},
   issn={0266-5611},
   review={\MR{4149841}},
   doi={10.1088/1361-6420/ab958f},
}

\bib{BlastenPaivarintaSylvester}{article}{
   author={Bl\aa sten, Eemeli},
   author={P\"{a}iv\"{a}rinta, Lassi},
   author={Sylvester, John},
   title={Corners always scatter},
   journal={Comm. Math. Phys.},
   volume={331},
   date={2014},
   number={2},
   pages={725--753},
   issn={0010-3616},
   review={\MR{3238529}},
   doi={10.1007/s00220-014-2030-0},
}

\bib{BlastenVesalainen}{article}{
   author={Bl\aa sten, Emilia},
   author={Vesalainen, Esa V.},
   title={Non-scattering energies and transmission eigenvalues in $\rm H^n$},
   journal={Ann. Acad. Sci. Fenn. Math.},
   volume={45},
   date={2020},
   number={1},
   pages={547--576},
   issn={1239-629X},
   review={\MR{4056552}},
   doi={10.5186/aasfm.2020.4522},
}

\bib{Caffarelli1977}{article}{
   author={Caffarelli, Luis A.},
   title={The regularity of free boundaries in higher dimensions},
   journal={Acta Math.},
   volume={139},
   date={1977},
   number={3-4},
   pages={155--184},
   issn={0001-5962},
   review={\MR{454350}},
   doi={10.1007/BF02392236},
}

\bib{CaffarelliFriedman}{article}{
   author={Caffarelli, Luis A.},
   author={Friedman, Avner},
   title={The free boundary in the Thomas-Fermi atomic model},
   journal={J. Differential Equations},
   volume={32},
   date={1979},
   number={3},
   pages={335--356},
   issn={0022-0396},
   review={\MR{535167}},
   doi={10.1016/0022-0396(79)90038-X},
}

\bib{Caff-Sha2004}{article}{
    AUTHOR = {Caffarelli, L. A.},
    AUTHOR = {Shahgholian, H.},
     TITLE = {The structure of the singular set of a free boundary in
              potential theory},
   JOURNAL = {Izv. Nats. Akad. Nauk Armenii Mat.},
  FJOURNAL = {Natsional\cprime naya Akademiya Nauk Armenii. Izvestiya. Matematika},
    VOLUME = {39},
      YEAR = {2004},
    NUMBER = {2},
     PAGES = {43--58},
      ISSN = {0002-3043},
   MRCLASS = {35R35 (35A20)},
  MRNUMBER = {2167825},
MRREVIEWER = {Eduardo V. Teixeira},
}

\bib{CakoniColtonHaddar}{book}{
   author={Cakoni, Fioralba},
   author={Colton, David},
   author={Haddar, Houssem},
   title={Inverse scattering theory and transmission eigenvalues},
   series={CBMS-NSF Regional Conference Series in Applied Mathematics},
   volume={88},
   publisher={Society for Industrial and Applied Mathematics (SIAM),
   Philadelphia, PA},
   date={2016},
   pages={x+193},
   isbn={978-1-611974-45-4},
   review={\MR{3601119}},
   doi={10.1137/1.9781611974461.ch1},
}

\bib{CakoniHaddar}{article}{
   author={Cakoni, Fioralba},
   author={Haddar, Houssem},
   title={Transmission eigenvalues in inverse scattering theory},
   conference={
      title={Inverse problems and applications: inside out. II},
   },
   book={
      series={Math. Sci. Res. Inst. Publ.},
      volume={60},
      publisher={Cambridge Univ. Press, Cambridge},
   },
   date={2013},
   pages={529--580},
   review={\MR{3135766}},
}

\bib{CakoniVogelius}{article}{
      title={Singularities almost always scatter: regularity results for non-scattering inhomogeneities}, 
      author={Cakoni, Fioralba},
      author={Vogelius, Michael S.},
      year={2021},
      eprint={2104.05058},
      archivePrefix={arXiv},
      primaryClass={math.AP}
}

\bib{ColtonKress}{book}{
   author={Colton, David},
   author={Kress, Rainer},
   title={Inverse acoustic and electromagnetic scattering theory},
   series={Applied Mathematical Sciences},
   volume={93},
   note={Fourth edition},
   publisher={Springer, Cham},
   date={2019},
   pages={xxii+518},
   isbn={978-3-030-30350-1},
   isbn={978-3-030-30351-8},
   review={\MR{3971246}},
   doi={10.1007/978-3-030-30351-8},
}

\bib{ColtonPaivarintaSylvester}{article}{
   author={Colton, David},
   author={P\"{a}iv\"{a}rinta, Lassi},
   author={Sylvester, John},
   title={The interior transmission problem},
   journal={Inverse Probl. Imaging},
   volume={1},
   date={2007},
   number={1},
   pages={13--28},
   issn={1930-8337},
   review={\MR{2262743}},
   doi={10.3934/ipi.2007.1.13},
}

\bib{Davis}{book}{
   author={Davis, Philip J.},
   title={The Schwarz function and its applications},
   note={The Carus Mathematical Monographs, No. 17},
   publisher={The Mathematical Association of America, Buffalo, N. Y.},
   date={1974},
   pages={xi+228},
   review={\MR{0407252}},
}

\bib{ElschnerHu1}{article}{
   author={Elschner, Johannes},
   author={Hu, Guanghui},
   title={Corners and edges always scatter},
   journal={Inverse Problems},
   volume={31},
   date={2015},
   number={1},
   pages={015003, 17},
   issn={0266-5611},
   review={\MR{3302364}},
   doi={10.1088/0266-5611/31/1/015003},
}

\bib{ElschnerHu2}{article}{
   author={Elschner, Johannes},
   author={Hu, Guanghui},
   title={Acoustic scattering from corners, edges and circular cones},
   journal={Arch. Ration. Mech. Anal.},
   volume={228},
   date={2018},
   number={2},
   pages={653--690},
   issn={0003-9527},
   review={\MR{3766986}},
   doi={10.1007/s00205-017-1202-4},
}

\bib{FKS_vectorial}{article}{
      author={Figalli, Alessio},
      author={Kim, Sunghan},
      author={Shahgholian, Henrik},
      year={2020},
      archivePrefix={arXiv},
      eprint={2012.15499},
      primaryClass={math.AP}
}

\bib{FS_matching}{article}{
   author={Figalli, Alessio},
   author={Shahgholian, Henrik},
   title={An overview of unconstrained free boundary problems},
   journal={Philos. Trans. Roy. Soc. A},
   volume={373},
   date={2015},
   number={2050},
   pages={20140281, 11},
   issn={1364-503X},
   review={\MR{3393319}},
   doi={10.1098/rsta.2014.0281},
}

\bib{GellRedmanHassell}{article}{
   author={Gell-Redman, Jesse},
   author={Hassell, Andrew},
   title={Potential scattering and the continuity of phase-shifts},
   journal={Math. Res. Lett.},
   volume={19},
   date={2012},
   number={3},
   pages={719--729},
   issn={1073-2780},
   review={\MR{2998150}},
   doi={10.4310/MRL.2012.v19.n3.a15},
}

\bib{HuSaloVesalainen}{article}{
   author={Hu, Guanghui},
   author={Salo, Mikko},
   author={Vesalainen, Esa V.},
   title={Shape identification in inverse medium scattering problems with a
   single far-field pattern},
   journal={SIAM J. Math. Anal.},
   volume={48},
   date={2016},
   number={1},
   pages={152--165},
   issn={0036-1410},
   review={\MR{3439763}},
   doi={10.1137/15M1032958},
}

\bib{Hormander1973}{article}{
   author={H\"{o}rmander, Lars},
   title={Lower bounds at infinity for solutions of differential equations
   with constant coefficients},
   journal={Israel J. Math.},
   volume={16},
   date={1973},
   pages={103--116},
   issn={0021-2172},
   review={\MR{340793}},
   doi={10.1007/BF02761975},
}

\bib{JerisonKenig1995}{article}{
   author={Jerison, David},
   author={Kenig, Carlos E.},
   title={The inhomogeneous Dirichlet problem in Lipschitz domains},
   journal={J. Funct. Anal.},
   volume={130},
   date={1995},
   number={1},
   pages={161--219},
   issn={0022-1236},
   review={\MR{1331981}},
   doi={10.1006/jfan.1995.1067},
}

\bib{KLS_jump}{article}{
   author={Kim, Sunghan},
   author={Lee, Ki-Ahm},
   author={Shahgholian, Henrik},
   title={An elliptic free boundary arising from the jump of conductivity},
   journal={Nonlinear Anal.},
   volume={161},
   date={2017},
   pages={1--29},
   issn={0362-546X},
   review={\MR{3672993}},
   doi={10.1016/j.na.2017.05.010},
}

\bib{KLS_nodal}{article}{
   author={Kim, Sunghan},
   author={Lee, Ki-Ahm},
   author={Shahgholian, Henrik},
   title={Nodal sets for ``broken'' quasilinear PDEs},
   journal={Indiana Univ. Math. J.},
   volume={68},
   date={2019},
   number={4},
   pages={1113--1148},
   issn={0022-2518},
   review={\MR{4001467}},
   doi={10.1512/iumj.2019.68.7711},
}

\bib{KN}{article} {
    AUTHOR = {Kinderlehrer, D.},
    AUTHOR = {Nirenberg, L.},
     TITLE = {Regularity in free boundary problems},
   JOURNAL = {Ann. Scuola Norm. Sup. Pisa Cl. Sci. (4)},
  FJOURNAL = {Annali della Scuola Normale Superiore di Pisa. Classe di
              Scienze. Serie IV},
    VOLUME = {4},
      YEAR = {1977},
    NUMBER = {2},
     PAGES = {373--391},
      ISSN = {0391-173X},
   MRCLASS = {35J25},
  MRNUMBER = {440187},
MRREVIEWER = {A. Friedman},
       URL = {http://www.numdam.org/item?id=ASNSP_1977_4_4_2_373_0},
}

\bib{KusiakSylvester}{article}{
   author={Kusiak, Steven},
   author={Sylvester, John},
   title={The scattering support},
   journal={Comm. Pure Appl. Math.},
   volume={56},
   date={2003},
   number={11},
   pages={1525--1548},
   issn={0010-3640},
   review={\MR{1995868}},
   doi={10.1002/cpa.3038},
}

\bib{Lax}{article}{
   author={Lax, P. D.},
   title={A stability theorem for solutions of abstract differential
   equations, and its application to the study of the local behavior of
   solutions of elliptic equations},
   journal={Comm. Pure Appl. Math.},
   volume={9},
   date={1956},
   pages={747--766},
   issn={0010-3640},
   review={\MR{86991}},
   doi={10.1002/cpa.3160090407},
}

\bib{LiHuYang}{article}{
   author={Li, Long},
   author={Hu, Guanghui},
   author={Yang, Jiansheng},
   title={Interface with weakly singular points always scatter},
   journal={Inverse Problems},
   volume={34},
   date={2018},
   number={7},
   pages={075002, 13},
   issn={0266-5611},
   review={\MR{3801096}},
   doi={10.1088/1361-6420/aabe56},
}

\bib{Liu_survey}{article}{
      title={On local and global structures of transmission eigenfunctions and beyond}, 
      author={Liu, Hongyu},
      year={2020},
      eprint={2008.03120},
      archivePrefix={arXiv},
      primaryClass={math.AP}
}

\bib{LiuTsou2020}{article}{
   author={Liu, Hongyu},
   author={Tsou, Chun-Hsiang},
   title={Stable determination of polygonal inclusions in Calder\'{o}n's problem
   by a single partial boundary measurement},
   journal={Inverse Problems},
   volume={36},
   date={2020},
   number={8},
   pages={085010, 23},
   issn={0266-5611},
   review={\MR{4149843}},
   doi={10.1088/1361-6420/ab9d6b},
}

\bib{LogunovMalinnikova_survey}{article}{
      title={Review of Yau's conjecture on zero sets of Laplace eigenfunctions}, 
      author={Logunov, Alexander},
      author={Malinnikova, Eugenia},
      year={2019},
      eprint={1908.01639},
      archivePrefix={arXiv},
      primaryClass={math.AP}
}

\bib{Malgrange}{article}{
   author={Malgrange, Bernard},
   title={Existence et approximation des solutions des \'{e}quations aux
   d\'{e}riv\'{e}es partielles et des \'{e}quations de convolution},
   language={French},
   journal={Ann. Inst. Fourier (Grenoble)},
   volume={6},
   date={1955/56},
   pages={271--355},
   issn={0373-0956},
   review={\MR{86990}},
}

\bib{McLean}{book}{
   author={McLean, William},
   title={Strongly elliptic systems and boundary integral equations},
   publisher={Cambridge University Press, Cambridge},
   date={2000},
   pages={xiv+357},
   isbn={0-521-66332-6},
   isbn={0-521-66375-X},
   review={\MR{1742312}},
}

\bib{Melrose}{book}{
   author={Melrose, Richard B.},
   title={Geometric scattering theory},
   series={Stanford Lectures},
   publisher={Cambridge University Press, Cambridge},
   date={1995},
   pages={xii+116},
   isbn={0-521-49673-X},
   isbn={0-521-49810-4},
   review={\MR{1350074}},
}

\bib{PaivarintaSaloUhlmann}{article}{
   author={P\"{a}iv\"{a}rinta, Lassi},
   author={Salo, Mikko},
   author={Uhlmann, Gunther},
   title={Inverse scattering for the magnetic Schr\"{o}dinger operator},
   journal={J. Funct. Anal.},
   volume={259},
   date={2010},
   number={7},
   pages={1771--1798},
   issn={0022-1236},
   review={\MR{2665410}},
   doi={10.1016/j.jfa.2010.06.002},
}

\bib{PaivarintaSaloVesalainen}{article}{
   author={P\"{a}iv\"{a}rinta, Lassi},
   author={Salo, Mikko},
   author={Vesalainen, Esa V.},
   title={Strictly convex corners scatter},
   journal={Rev. Mat. Iberoam.},
   volume={33},
   date={2017},
   number={4},
   pages={1369--1396},
   issn={0213-2230},
   review={\MR{3729603}},
   doi={10.4171/RMI/975},
}

\bib{PetrosyanShahgholianUraltseva}{book}{
   author={Petrosyan, Arshak},
   author={Shahgholian, Henrik},
   author={Uraltseva, Nina},
   title={Regularity of free boundaries in obstacle-type problems},
   series={Graduate Studies in Mathematics},
   volume={136},
   publisher={American Mathematical Society, Providence, RI},
   date={2012},
   pages={x+221},
   isbn={978-0-8218-8794-3},
   review={\MR{2962060}},
   doi={10.1090/gsm/136},
}

\bib{RulandSalo}{article}{
   author={R\"{u}land, Angkana},
   author={Salo, Mikko},
   title={Quantitative Runge approximation and inverse problems},
   journal={Int. Math. Res. Not. IMRN},
   date={2019},
   number={20},
   pages={6216--6234},
   issn={1073-7928},
   review={\MR{4031236}},
   doi={10.1093/imrn/rnx301},
}

\bib{Sakai1982}{book}{
   author={Sakai, Makoto},
   title={Quadrature domains},
   series={Lecture Notes in Mathematics},
   volume={934},
   publisher={Springer-Verlag, Berlin-New York},
   date={1982},
   pages={i+133},
   isbn={3-540-11562-5},
   review={\MR{663007}},
}

\bib{Sa1}{article} {
    AUTHOR = {Sakai, Makoto},
     TITLE = {Regularity of a boundary having a {S}chwarz function},
   JOURNAL = {Acta Math.},
  FJOURNAL = {Acta Mathematica},
    VOLUME = {166},
      YEAR = {1991},
    NUMBER = {3-4},
     PAGES = {263--297},
      ISSN = {0001-5962},
   MRCLASS = {30D99},
  MRNUMBER = {1097025},
MRREVIEWER = {W. H. J. Fuchs},
       DOI = {10.1007/BF02398888},
       URL = {https://doi.org/10.1007/BF02398888},
}

\bib{Sa2}{article} {
    AUTHOR = {Sakai, Makoto},
     TITLE = {Regularity of free boundaries in two dimensions},
   JOURNAL = {Ann. Scuola Norm. Sup. Pisa Cl. Sci. (4)},
  FJOURNAL = {Annali della Scuola Normale Superiore di Pisa. Classe di
              Scienze. Serie IV},
    VOLUME = {20},
      YEAR = {1993},
    NUMBER = {3},
     PAGES = {323--339},
      ISSN = {0391-173X},
   MRCLASS = {35R35 (35J85 49Q10)},
  MRNUMBER = {1256071},
MRREVIEWER = {J. Ka\v{c}ur},
       URL = {http://www.numdam.org/item?id=ASNSP_1993_4_20_3_323_0},
}

\bib{Sc}{article}{
    AUTHOR = {Schaeffer, David G.},
     TITLE = {Some examples of singularities in a free boundary},
   JOURNAL = {Ann. Scuola Norm. Sup. Pisa Cl. Sci. (4)},
  FJOURNAL = {Annali della Scuola Normale Superiore di Pisa. Classe di
              Scienze. Serie IV},
    VOLUME = {4},
      YEAR = {1977},
    NUMBER = {1},
     PAGES = {133--144},
      ISSN = {0391-173X},
   MRCLASS = {58E15 (35J99)},
  MRNUMBER = {516201},
MRREVIEWER = {N. A. Warsi},
       URL = {http://www.numdam.org/item?id=ASNSP_1977_4_4_1_133_0},
}

\bib{MR0304972}{book}{
   author={Stein, Elias M.},
   author={Weiss, Guido},
   title={Introduction to Fourier analysis on Euclidean spaces},
   note={Princeton Mathematical Series, No. 32},
   publisher={Princeton University Press, Princeton, N.J.},
   date={1971},
   pages={x+297},
   review={\MR{0304972}},
}

\bib{Uhlmann_survey}{article}{
   author={Uhlmann, Gunther},
   title={30 years of Calder\'{o}n's problem},
   conference={
      title={S\'{e}minaire Laurent Schwartz---\'{E}quations aux d\'{e}riv\'{e}es partielles
      et applications. Ann\'{e}e 2012--2013},
   },
   book={
      series={S\'{e}min. \'{E}qu. D\'{e}riv. Partielles},
      publisher={\'{E}cole Polytech., Palaiseau},
   },
   date={2014},
   pages={Exp. No. XIII, 25},
   review={\MR{3381003}},
}

\bib{UhlmannVasy}{article}{
   author={Uhlmann, Gunther},
   author={Vasy, Andr\'{a}s},
   title={Fixed energy inverse problem for exponentially decreasing
   potentials},
   journal={Methods Appl. Anal.},
   volume={9},
   date={2002},
   number={2},
   pages={239--247},
   issn={1073-2772},
   review={\MR{1957487}},
   doi={10.4310/MAA.2002.v9.n2.a2},
}

\bib{VogeliusXiao}{article}{
      title={Finiteness results concerning non-scattering wave numbers for incident plane- and Herglotz waves}, 
      author={Vogelius, Michael},
      author={Xiao, Jingni},
      year={2021},
      journal={SIAM J. Math. Anal.}
      pages={to appear},
}

\bib{Weder_completeness}{article}{
   author={Weder, Ricardo},
   title={Completeness of averaged scattering solutions and inverse
   scattering at a fixed energy},
   journal={Comm. Partial Differential Equations},
   volume={32},
   date={2007},
   number={4-6},
   pages={675--691},
   issn={0360-5302},
   review={\MR{2334828}},
   doi={10.1080/03605300500530370},
}

\bib{Yafaev}{book}{
   author={Yafaev, D. R.},
   title={Mathematical scattering theory},
   series={Mathematical Surveys and Monographs},
   volume={158},
   note={Analytic theory},
   publisher={American Mathematical Society, Providence, RI},
   date={2010},
   pages={xiv+444},
   isbn={978-0-8218-0331-8},
   review={\MR{2598115}},
   doi={10.1090/surv/158},
}

\bib{Yeressian}{article}{
   author={Yeressian, Karen},
   title={Obstacle problem with a degenerate force term},
   journal={Anal. PDE},
   volume={9},
   date={2016},
   number={2},
   pages={397--437},
   issn={2157-5045},
   review={\MR{3513139}},
   doi={10.2140/apde.2016.9.397},
}

\end{biblist}
\end{bibdiv}

\end{document}